\newtheoremstyle{repeat}{}{}{\itshape}{}{\bfseries}{.}{.5em}{#3, repeated}
\newtheorem{theo}{Theorem}[section]
\newtheorem{lemma}[theo]{Lemma}
\newtheorem{coro}[theo]{Corollary}
\newtheorem{fact}[theo]{Fact}
\newtheorem{claim}{Claim}[theo]
\theoremstyle{definition}
\newtheorem{question}[theo]{Question}
\newtheorem{exam}[theo]{Example}
\newtheorem{exams}[theo]{Examples}
\newtheorem{rem}[theo]{Remark}
\newtheorem{defi}[theo]{Definition}
\newtheorem{conv}[theo]{Convention}
\newtheorem{constr}[theo]{Construction}
\theoremstyle{repeat}
\newtheorem*{repeated-theorem}{Repeat}
\newcommand\Ind{\operatorname{Ind}}
\newcommand\Po{\operatorname{Po}}
\newcommand\Tc{\operatorname{Tc}}
\newcommand\Rt{\operatorname{Rt}}
\newcommand\Set{\operatorname{\bf Set}}
\newcommand\mono{\operatorname{mono}}
\newcommand\colim{\operatorname{colim}}
\newcommand\ca{\mathcal {A}}
\newcommand\cc{\mathcal {C}}
\newcommand\ck{\mathcal {K}}
\newcommand\cl{\mathcal {L}}
\newcommand\cm{\mathcal {M}}
\newcommand\cx{\mathcal {X}}
\newcommand\N{\mathbb{N}}
\newcommand{\pure}{{\textup{pure}}}
\newcommand{\pbtxt}{{\textup{p.b.}}}
\renewcommand{\L}{\mathcal{L}}
\renewcommand{\phi}{\varphi}
\newcommand{\gen}[1]{\langle #1 \rangle}
\newcommand{\pushout}[1][dl]{\save*!/#1-1.5pc/#1:(-1,1)@^{|-}\restore}
\def\Ind#1#2{#1\setbox0=\hbox{$#1x$}\kern\wd0\hbox to 0pt{\hss$#1\mid$\hss}
\lower.9\ht0\hbox to 0pt{\hss$#1\smile$\hss}\kern\wd0}
\def\ind{\mathop{\mathpalette\Ind{}}}
\def\Notind#1#2{#1\setbox0=\hbox{$#1x$}\kern\wd0\hbox to 0pt{\mathchardef
\nn="3236\hss$#1\nn$\kern1.4\wd0\hss}\hbox to 0pt{\hss$#1\mid$\hss}\lower.9\ht0
\hbox to 0pt{\hss$#1\smile$\hss}\kern\wd0}
\newcommand{\nf}{\ind}
\title{Cofibrant generation of pure monomorphisms in presheaf categories}
\author{S. Cox, J. Feigert, M. Kamsma, M. Mazari-Armida and J. Rosick\'{y}}
\date{\today}
\subjclass{18C05, 20M50 (Primary), 03C48, 03C60, 18C35, 20M30 (Secondary)}
\keywords{cofibrant generation, presheaf category, acts, stable independence, pure monomorphism}
\thanks{
\begin{minipage}{0.6\textwidth}
The first author is supported by NSF grant DMS-2154141.
The third author is supported by Marie Sk\l{}odowska-Curie grant number 101130801.
The fourth author is supported by an NSF grant DMS-2348881 and a Simons Foundation grant MPS-TSM-00007597.
\end{minipage}%
\begin{minipage}{0.4\textwidth}
\begin{center}
    \includegraphics[height=1.1cm]{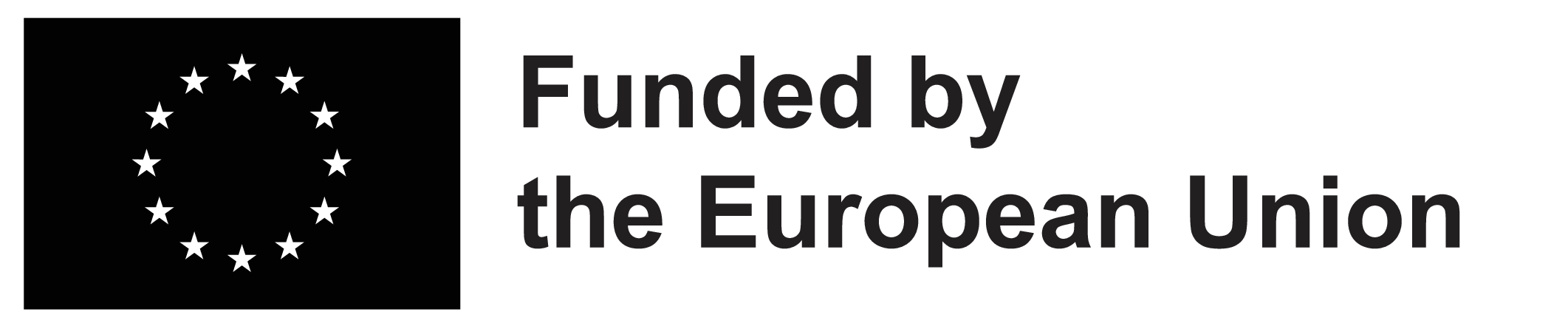}
\end{center}
\end{minipage}
\nopunct
}
\address{\newline Sean Cox\newline
Department of Mathematics and Applied Mathematics\newline
Virginia Commonwealth University\newline
1015 Floyd Ave, VA 23284, Richmond, USA}
\email{scox9@vcu.edu}
\address{\newline Jonathan Feigert\newline
Department of Mathematics\newline
Baylor University\newline
Waco, Texas, USA}
\email{Jonathan\_Feigert1@baylor.edu}
\address{\newline Mark Kamsma\newline
Department of Mathematics and Statistics\newline
Masaryk University, Faculty of Sciences\newline
Kotl\'{a}\v{r}sk\'{a} 2, 611 37, Brno, Czech Republic}
\urladdr{https://markkamsma.nl}
\email{mark@markkamsma.nl}
\address{\newline Marcos Mazari-Armida\newline
Department of Mathematics\newline
Baylor University\newline
Waco, Texas, USA}
\urladdr{https://sites.baylor.edu/marcos\_mazari/}
\email{marcos\_mazari@baylor.edu}
\address{\newline Ji\v{r}\'{i} Rosick\'{y}\newline
Department of Mathematics and Statistics\newline
Masaryk University, Faculty of Sciences\newline
Kotl\'{a}\v{r}sk\'{a} 2, 611 37, Brno, Czech Republic}
\email{rosicky@math.muni.cz}
\begin{document}

\begin{abstract}
We characterise when the pure monomorphisms in a presheaf category $\Set^\cc$ are cofibrantly generated in terms of the category $\cc$. In particular, when $\cc$ is a monoid $S$ this characterises cofibrant generation of pure monomorphisms between sets with an $S$-action in terms of $S$: this happens if and only if for all $a, b \in S$ there is $c \in S$ such that $a = cb$ or $ca = b$. We give a model-theoretic proof: we prove that our characterisation is equivalent to having a stable independence relation, which in turn is equivalent to cofibrant generation. As a corollary, we show that pure monomorphisms in acts over the multiplicative monoid of natural numbers are not cofibrantly generated.
\end{abstract}

\maketitle
\tableofcontents

\section{Introduction}
Pure monomorphisms lie between embeddings and elementary embeddings. They were introduced more than a hundred years ago  by Pr\"{ufer} for abelian groups \cite{Pr}. Since then they have been thoroughly studied from different perspectives: module theory \cite{P}, model theory \cite{R}, acts theory \cite[Chapter III]{KKM}, universal algebra \cite{W2} and category theory \cite[Section 2.D]{AR}. 

 Any module category has enough pure injectives, i.e., every module is a pure submodule of a pure injective module (for instance, of a double character module). Moreover, there is a Baer type criterion of pure injectivity: there is a test set for pure injectivity (see \cite[Lemma 1.3]{ST} or \cite[Corollary 4.10]{M-AR}). A stronger result, that pure monomorphisms are cofibrantly generated, was proved recently in \cite{LPRV}. Beyond module categories, determining if a category has enough pure injectives has been thoroughly studied \cite{BN}, \cite{T2}, \cite{T3}, see also the summary given in \cite{W2}. Already in the seventies it was known that many categories do not have enough pure injectives \cite{T3}.

Our aim is to study cofibrant generation of pure monomorphisms, which implies having enough pure injectives, in many-sorted unary universal algebras. That is, in categories $\Set^\cc$ of set-valued functors on a small category $\cc$. If $\cc$ has a single object---so if it is a monoid $S$---we get single sorted unary algebras, which are called $S$-acts. These $S$-acts have been systematically studied \cite{KKM} and can be viewed as ``non-additive modules''. The early results about the existence of enough pure injectives are due to Wenzel \cite{W} and Banaschewski \cite{Ba}. They showed that acts over the additive monoid of natural numbers and acts over groups have enough pure injectives. Recently these results were strengthened to cofibrant generation \cite{BRo} (see Remark \ref{strengthening-borceux-rosicky}). On the other hand, in \cite[Remark 5]{Ba}, it was shown that one does not necessarily have enough pure injectives in all categories of acts, and so pure monomorphisms are not cofibrantly generated in all such categories.

Sufficient conditions for $\Set^\cc$ to have enough pure injectives are given in \cite[Corollary 2.6]{BRo}. In fact, they even show that pure monomorphisms are cofibrantly generated (see the proof of \cite[Theorem 2.4]{BRo}). These conditions concern both $\cc$ and $\Set^\cc$. The condition on $\cc$ is what we call \emph{locally linearly preordered} (see Definition \ref{locally-linearly-preordered}). The condition concerning $\Set^\cc$ is that finitely presentable objects are closed under subobjects and quotients. Our work shows that the second condition is superfluous, and it establishes the converse statement.

Our main result is of algebraic and category-theoretic nature, with the notion of cofibrant generation originating from homotopy theory \cite{B}. However, our proof is model-theoretic\footnote{Some of the results were spawned by set-theoretic elementary submodel arguments, though ultimately such arguments were replaced by more powerful and precise model-theoretic ones.}. The main model-theoretic notion we use is that of a stable independence relation (see Section \ref{sec:stable-independence}), which originates from the central model-theoretic notion of Shelah's forking. This makes the results of this paper one of the first applications of the categorical approach to model-theoretic independence.

The key ingredient connecting model theory with algebra is \cite[Theorem 3.1]{LRV1}, which equates cofibrant generation to model-theoretic stability. Our proof strategy is then to show that $\Set^\cc$ is model-theoretically stable, in the sense that it admits a stable independence relation, if and only if $\cc$ is locally linearly preordered. For this, we heavily make use of the techniques of Mustafin in \cite{Mu}, where it is proved that the first-order theory of every $S$-act is stable if and only if $S$ is locally linearly preordered.

Moreover, we prove that pure monomorphisms in $\Set^\cc$ are cofibrantly generated if and only if pullback squares of pure monomorphisms are pure effective. This concept was introduced in \cite{BRo} under the name ``effective unions of pure subobjects''. It was motivated by Barr's effective unions \cite{Barr}, which are closely related to the existence of enough injectives. Pure effective unions distinguish acts from modules---abelian groups do not have effective unions of pure subobjects \cite[Example 2.5]{LPRV}.

Our main result implies that pure monomorphims in $\Set^\cc$ are cofibrantly generated if $\cc$ is a groupoid. Hence $\Set^\cc$ has enough pure injectives in this case, which extends the old result of \cite{Ba}. On the other hand, our result quickly implies that pure monomorphisms in acts over the multiplicative monoid of natural numbers are not cofibrantly generated.

\textbf{Main results.} Our main result characterises exactly when the pure monomorphisms in a presheaf category $\Set^\cc$ are cofibrantly generated, in terms of $\cc$. The characterising property is very simple.
\begin{defi}
\label{locally-linearly-preordered}
We call a category $\cc$ \emph{locally linearly preordered} if for any span of arrows $Y \xleftarrow{f} X \xrightarrow{g} Z$ in $\cc$ there is $h: Y \to Z$ such that $hf = g$ or there is $h': Z \to Y$ such that $f = h'g$.
\end{defi}
Our terminology is explained as follows. Any category induces a preorder on its collection of objects, by setting $X \leq Y$ whenever there is an arrow $X \to Y$. So Definition \ref{locally-linearly-preordered} says that for each object $X$ of $\cc$ the induced preorder on the coslice category $X/\cc$ is a linear preorder.
\begin{theo}
\label{main-theorem-simplified}
The following are equivalent for a small category $\cc$.
\begin{enumerate}
\item The pure monomorphisms in $\Set^\cc$ are cofibrantly generated.
\item There is a stable independence relation on $\Set^\cc_\pure$.
\item The category $\cc$ is locally linearly preordered.
\end{enumerate}
\end{theo}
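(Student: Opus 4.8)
The plan is to prove $(1)\Leftrightarrow(2)$ by invoking the general theory relating cofibrant generation to stable independence, and to prove $(2)\Leftrightarrow(3)$ --- the substance of the theorem --- by adapting the combinatorial analysis of Mustafin \cite{Mu}. For $(1)\Leftrightarrow(2)$, I would first check that $(\Set^\cc,\,\text{pure monomorphisms})$ satisfies the standing hypotheses of \cite[Theorem 3.1]{LRV1}: $\Set^\cc$ is locally finitely presentable (automatic for a presheaf category), $\Set^\cc_\pure$ is an accessible category with directed colimits, and the pure monomorphisms have the amalgamation property and are closed under pushout along pure monomorphisms, transfinite composition and retracts. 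Granting this, \cite[Theorem 3.1]{LRV1} gives exactly that the pure monomorphisms are cofibrantly generated if and only if there is a stable independence relation on $\Set^\cc_\pure$. The one subtle point is that $\Set^\cc_\pure$ need not be an abstract elementary class --- coherence of pure monomorphisms can fail --- so one works in the setting of accessible categories with a distinguished class of monomorphisms, which is the generality of \cite{LRV1}; verifying the amalgamation property for pure monomorphisms of presheaves is the part of this reduction that takes real argument.

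For $(3)\Rightarrow(2)$, suppose $\cc$ is locally linearly preordered. I would present $\cc$-presheaves as many-sorted unary algebras --- a sort $F(c)$ for each object $c$ and a unary operation for each arrow of $\cc$ --- set up so that categorical pure monomorphisms coincide with model-theoretic pure (positive-primitive) embeddings. Following Mustafin's treatment of $S$-acts, the span-factorization property of Definition \ref{locally-linearly-preordered} forces a normal form on positive-primitive formulas and rules out the order property, so the ambient theory is stable; the general theory then produces a stable independence relation on $\Set^\cc_\pure$, describable over finitely presentable bases as the ``free amalgam'' $A\cup B$ sitting inside a pushout with $A\cap B=C$, and in general as non-forking. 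One then checks the axioms of a stable independence relation --- invariance, monotonicity, symmetry, transitivity, existence, and the local-character/accessibility axiom --- the last of these requiring a cardinality (downward L\"owenheim--Skolem type) argument fed by the bound on Galois types coming from stability. I expect this to be the main obstacle: Mustafin works over a monoid, and one must genuinely lift his normal-form combinatorics to an arbitrary small category $\cc$, analysing finitely generated $\cc$-presheaves and their amalgams closely enough to see that \emph{precisely} the span-factorization property eliminates every potential instability configuration.

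For $(2)\Rightarrow(3)$, I would argue by contraposition. If $\cc$ is not locally linearly preordered, fix a span $Y\xleftarrow{f}X\xrightarrow{g}Z$ admitting neither $h\colon Y\to Z$ with $hf=g$ nor $h'\colon Z\to Y$ with $f=h'g$. Gluing the representable presheaves $\cc(Y,-)$ and $\cc(Z,-)$ along $\cc(X,-)$ (or suitable quotients thereof), I would build an infinite configuration inside some $\cc$-presheaf whose positive-primitive types encode a strict linear order --- that is, exhibit the order property for $\Set^\cc_\pure$ --- with the non-factorizability of the span guaranteeing that the relevant elements stay pp-inequivalent at every stage. Since a class admitting a stable independence relation is stable and hence omits the order property, this contradicts $(2)$ and closes the loop of implications.
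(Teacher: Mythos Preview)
Your reduction $(1)\Leftrightarrow(2)$ via \cite[Theorem 3.1]{LRV1} matches the paper exactly; note however that amalgamation of pure monomorphisms is immediate (pure monos are pushout-stable, Fact~\ref{pure-mono-facts}(iii)), so this is not where any work lies.

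For $(3)\Rightarrow(2)$ your route diverges from the paper's, and there is a gap. You propose to deduce a stable independence relation on $\Set^\cc_\pure$ from first-order stability of the ambient theory, but there is no black-box result converting stability of a first-order theory into a categorical stable independence relation on the category of models with \emph{pure} (rather than elementary) embeddings. The relation you describe --- pullback squares, free amalgams with $A\cap B=C$ --- is precisely the pure-effective one, and what must be proved is its \emph{accessibility} in the sense of Definition~\ref{stable-independence}. The paper does not argue via Galois-type bounds or local character at all: it proves directly that when $\cc$ is locally linearly preordered every pullback square of pure monomorphisms is already pure effective (Theorem~\ref{characterisation-of-pure-effective-squares-if-llp}, via a short-path lemma, Lemma~\ref{llp-implies-small-connecting-paths}, which is where the span-factorisation hypothesis is actually used), and then obtains accessibility by exhibiting $\Set^\cc_{\pure,\ind}$ as a pseudopullback of accessible categories (Corollary~\ref{pure-effective-squares-are-stable-if-llp}). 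Mustafin's pp-normal-form machinery is not invoked for this direction.

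For $(2)\Rightarrow(3)$ your contrapositive construction is essentially the paper's (Construction~\ref{order-property-construction}, Theorem~\ref{no-order-property-implies-llp}): from a bad span one builds a presheaf witnessing the order property for $\exists z\,(f\cdot z=x\wedge g\cdot z=y)$. The gap is your final sentence: ``a class admitting a stable independence relation is stable and hence omits the order property'' is not a citation here. Categorical stable independence (accessibility of $\ck_{\ind}$) does not automatically preclude a pp-formula from having the order property; one is a statement about a category of squares, the other about satisfaction in models. The paper proves this step by hand (Theorem~\ref{stability-implies-no-complete-bipartite-graph}): from accessibility of $\Set^\cc_{\pure,\ind}$ one extracts cardinals $\mu<\lambda$, uses compactness to blow up an interpreted complete bipartite graph to size $\lambda^+$, and then exhibits a concrete pure-effective square whose pushout fails the relevant pp-formula --- a direct cardinality contradiction, not an appeal to abstract stability theory.
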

The above is a simplified version of our main result: Theorem \ref{cofibrantly-generated-classification}, which gives further equivalent conditions that provide more information about the model-theoretic stability.

The usual strategy to obtain enough injectives from cofibrant generation applies.
\begin{coro}
\label{enough-pure-injectives}
If $\cc$ is a locally linearly preordered category then $\Set^\cc$ has enough pure injectives.
\end{coro}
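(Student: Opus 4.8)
The plan is to invoke Theorem~\ref{main-theorem-simplified} and then run the standard argument deriving enough injectives from cofibrant generation. By the implication $(3)\Rightarrow(1)$ of Theorem~\ref{main-theorem-simplified}, since $\cc$ is locally linearly preordered the pure monomorphisms of $\Set^\cc$ form a cofibrantly generated class; that is, there is a \emph{set} $I$ of morphisms of $\Set^\cc$ with $\cof(I)$ equal to the class of pure monomorphisms. Since $\Set^\cc$ is a presheaf category it is locally presentable, hence cocomplete and such that every object is small, which is exactly what is needed for Quillen's small object argument to apply to the set $I$. The small object argument then provides a (functorial) factorisation of every morphism as a member of $\cof(I)$ followed by a member of the associated right class $\crr$, where $\crr$ consists precisely of the morphisms having the right lifting property with respect to $I$ --- equivalently, by the usual calculus of lifting properties, with respect to every pure monomorphism, since $\crr = (\cof(I))^{\text{rlp}}$.

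First I would apply this factorisation to the unique morphism $X \to 1$ from an arbitrary object $X \in \Set^\cc$ to the terminal object $1$, obtaining
\[
X \xrightarrow{\ i\ } E \xrightarrow{\ p\ } 1, \qquad i \in \cof(I), \quad p \in \crr .
\]
Then $i$ is a pure monomorphism, because $\cof(I)$ is precisely the class of pure monomorphisms. Next I would unwind the meaning of $p \in \crr$: as the codomain $1$ is terminal, a lifting problem for $p$ against a pure monomorphism $m\colon A \to A'$ is nothing but an arbitrary morphism $A \to E$, and a solution is an extension $A' \to E$ along $m$. Hence $p \in \crr$ says exactly that $E$ is injective with respect to all pure monomorphisms, i.e.\ $E$ is pure-injective. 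Thus every object $X$ admits a pure monomorphism $i\colon X \to E$ into a pure-injective object $E$, which is the definition of $\Set^\cc$ having enough pure injectives.

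I do not expect any serious obstacle here: once Theorem~\ref{main-theorem-simplified} is in hand, the argument above is entirely formal. The only points requiring (routine) verification are that the hypotheses of the small object argument are met --- automatic since presheaf categories are locally presentable --- and the translation between ``$p\colon E \to 1$ has the right lifting property against every pure monomorphism'' and ``$E$ is pure-injective'', which is immediate from the definition of pure-injectivity together with the observation that a morphism into the terminal object imposes no constraint on the bottom edge of a lifting square. All of the genuine content --- the equivalence between local linear preorder, the existence of a stable independence relation, and cofibrant generation of the pure monomorphisms --- has already been established in Theorem~\ref{main-theorem-simplified}.
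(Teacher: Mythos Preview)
Your proof is correct and follows essentially the same route as the paper: both invoke the main theorem to obtain cofibrant generation of pure monomorphisms, and then factor the map $X \to 1$ through the resulting weak factorisation system to produce a pure monomorphism into a pure-injective. The only difference is cosmetic---the paper compresses your small-object-argument discussion into a citation of \cite[Proposition~1.3]{B}, whereas you spell out the details.
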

To obtain the statement from the abstract concerning $S$-acts for a monoid $S$, we refer to Example \ref{monoid-as-category}.

\textbf{Overview and proof strategy.} We summarise our proof strategy, which at the same time provides an overview of this paper. As mentioned before, the aim is to show that $\Set^\cc$ admits a stable independence relation if and only if $\cc$ is locally linearly preordered. In Section \ref{sec:bad-configurations} we essentially prove that that stability implies $\cc$ being locally linearly preordered. We use the common model-theoretic theme of identifying bad infinite combinatorial configurations, and then showing that such configurations cannot occur when we have a stable independence relation (Theorem \ref{stability-implies-no-complete-bipartite-graph}). The first bad configuration amounts to interpreting an infinite complete bipartite graph (Definition \ref{complete-bipartite-fg-graph}), which can be obtained from the second bad configuration: interpreting an infinite linear order (Definition \ref{fg-order-property}), also known as the \emph{order property} in model theory. We then link this to $\cc$ being locally linearly preordered in Theorem \ref{no-order-property-implies-llp}: in Construction \ref{order-property-construction} we show how a witness of $\cc$ \emph{not} being locally linearly preordered can be turned into an instance of the order property. In Section \ref{sec:characterising-pure-effective-squares} we show that if $\cc$ is a locally linearly preordered category then the pure effective squares are precisely the pullbacks of pure monomorphisms (Theorem \ref{characterisation-of-pure-effective-squares-if-llp}). This allows us to quickly conclude that the pure effective squares form a stable independence relation (Corollary \ref{pure-effective-squares-are-stable-if-llp}). Finally, in Section \ref{sec:classification-for-cofibrant-generation} we put everything together to obtain the main result and its corollary mentioned above. We also discuss several applications of the main theorem.

\textbf{Acknowledgements.} We thank the anonymous referee for their suggestions that helped improve the presentation of this paper.

\section{Preliminaries}
We briefly introduce the notions from category theory and model theory that we will use in this paper. Further details can be found in \cite{AR, LRV, LRV1}. We will often cite results from these sources.

The main categorical context is that of locally presentable and accessible categories.
\begin{defi}
\label{accessible-category}
Let $\lambda$ be a regular cardinal. A category $\ck$ is called $\lambda$-accessible if:
\begin{itemize}
\item it has $\lambda$-directed colimits,
\item there is a set $\ca$ of $\lambda$-presentable objects such that every object is a $\lambda$-directed colimit of objects from $\ca$.
\end{itemize}
We call a category \emph{accessible} if it is $\lambda$-accessible for some $\lambda$.

A ($\lambda$-)accessible category is called \emph{locally ($\lambda$-)presentable} if it is also complete and cocomplete. For $\lambda = \omega$ we speak of \emph{finitely accessible} and \emph{locally finitely presentable} categories instead.
\end{defi}
We also briefly recall that an object $A$ of $\ck$ being \emph{$\lambda$-presentable} means that the functor $\hom(A, -): \ck \to \Set$ preserves $\lambda$-directed colimits. Intuitively, this means that $A$ ``has size $< \lambda$'', see for example the characterisation of $\lambda$-presentable presheaves at the end of Section \ref{subsec:presheaves}.
\subsection{Monoids as presheaves, and presheaves as unary algebras}
\label{subsec:presheaves}
We will work in a presheaf category $\Set^\cc$, where $\cc$ is any small category. The objects in this category are functors $\cc \to \Set$ and the arrows are natural transformations between such functors. It will be more convenient to think about $\Set^\cc$ as the category of a multi-sorted unary algebras, see below. Throughout, we fix a small category $\cc$.
\begin{defi}
\label{presheaf-as-multisorted-unary-algebra}
We define the multi-sorted signature $\cl_\cc$ as follows. It has a sort $X$ for every object $X$ in $\cc$ and it has a function symbol $f: X \to Y$ for every arrow $f: X \to Y$ in $\cc$. The (covariant) presheaves on $\cc$ are then viewed as a variety of algebras (in the sense of universal algebra) in $\cl_\cc$. That is, they are axiomatised by the following equations:
\begin{enumerate}[label=(\roman*)]
\item for each object $X$ in $\cc$, we have the equation $Id_X(x) = x$, where $Id_X$ is the identity arrow on $X$ and $x$ is a variable of sort $X$;
\item for any two composable arrows $f: X \to Y$ and $g: Y \to Z$ in $\cc$, we write $h = gf$ and we have the equation $g(f(x)) = h(x)$, where $x$ is a variable of sort $X$.
\end{enumerate}
One straightforwardly checks that homomorphisms between such algebras $K$ and $L$ are the same thing as natural transformations between $K$ and $L$ viewed as functors, which completes our algebraic perspective of $\Set^\cc$.
\end{defi}
Taking the perspective of multi-sorted unary algebras is very close to the original case where we consider (left) $S$-acts for a monoid $S$. We set up our notation in such a way that working in the greater generality of presheaves comes at no extra cost compared to working with just $S$-acts. In fact, if the reader wishes, they are invited to read everything as if the category $\cc$ is just a monoid (in the sense below).
\begin{exam}
\label{monoid-as-category}
Let $S$ be a monoid. We view $S$ as a category in the usual way: it has one object $*$ and an arrow $a: * \to *$ for every $a \in S$. The composition operation in this category is then given by the multiplication operation from the monoid. So a functor $F: S \to \Set$ is the same thing as a set $F(*)$ equipped with a left $S$-action, and natural transformations between such functors are exactly homomorphisms of left $S$-acts.

Under this identification of monoids with one-object categories we have that $S$ is locally linearly preordered if for any $a, b \in S$ there is $c \in S$ such that $a = cb$ or $ca = b$.
\end{exam}
\begin{conv}
\label{presheaf-notation-convention}
Let $K$ be a presheaf on $\cc$, by which we will always mean a covariant presheaf, viewed as a multi-sorted unary algebra (see Definition \ref{presheaf-as-multisorted-unary-algebra}).
\begin{enumerate}[label=(\roman*)]
\item Let $f: X \to Y$ be an arrow in $\cc$ and let $x$ be an element of sort $X$ in $K$. We write $f \cdot x$ for the (left) action of $f$ on $x$.
\item We use capital letters $X, Y, Z, \ldots$ for objects in $\cc$ and lowercase letters $f, g, h, \ldots$ for arrows in $\cc$. We will simply write $X \in \cc$ (resp.\ $f \in \cc$) to indicate the $X$ (resp.\ $f$) is an object (resp.\ arrow) in $\cc$.
\item We often omit the sorts (i.e., objects in $\cc$) from notation. So $x \in K$ means that $x$ is an element of some sort $X$ in $K$. Similarly, for $x \in K$ and $f \in \cc$ we will write $f \cdot x$ for the action of $f$ on $x$, which is then assumed to be of the right type (i.e., the domain of $f$ is the same as the sort of $x$).
\item Any set-theoretic operations on presheaves are computed on every sort. For example, given another presheaf $L$, we write $K \cap L$ for the presheaf whose sort $X$ is the intersection of the sort $X$ of $K$ and the sort $X$ of $L$. For $A \subseteq K$ a subset of $K$, we write $|A|$ for the sum of the cardinalities of the sorts in $A$.
\end{enumerate}
\end{conv}
Whenever we talk about an $S$-act for a monoid $S$, we will mean a \emph{left} $S$-action, matching the above convention of only considering covariant presheaves.

The category $\Set^\cc$ has many nice properties, which we briefly discuss here and will use implicitly throughout the paper. Firstly, the monomorphisms are precisely those homomorphisms which are injective on every sort. It is a locally finitely presentable category \cite[Corollary 3.7]{AR}. So in particular it has pullbacks and pushouts, and the pullback (resp.\ pushout) of a cospan (resp.\ span) of monomorphisms consists again of monomorphisms.

It will be useful to have an explicit description of such pullbacks and pushouts. A commuting square of monomorphisms
\[
\xymatrix@=1pc{
  A \ar[r] & L \\
  K \ar[u] \ar[r] & B \ar[u]
}
\]
is a pullback precisely when $K = A \cap B$. Here we view all monomorphisms as inclusions and we compute the intersection on each sort separately, as per Convention \ref{presheaf-notation-convention}(iii).

For the description of pushouts, we let $L \supseteq K \subseteq M$ be a span of inclusions of presheaves. Their pushout is simply the disjoint union over $K$. That is, the pushout is given by
\[
(L \sqcup M) / \sim,
\]
where $\sim$ identifies the copies of $K$ in $L$ and $M$. In particular, any pushout square that consists of monomorphisms is also a pullback square.

Finally, for a regular cardinal $\lambda$ bigger than the sum of the cardinalities of the sets of objects and arrows in $\cc$, a presheaf $K$ is $\lambda$-presentable if and only if $|K| < \lambda$ \cite[Example 1.14(5)]{AR}.
\begin{defi}
\label{generated-subpresheaf}
Let $K$ be a presheaf and let $A \subseteq K$ be any subset. We write $\gen{A}$ for the subpresheaf generated by $A$. That is,
\[
\gen{A} = \{ f \cdot a : a \in A, f \in \cc \}.
\]
\end{defi}

\subsection{Pure monomorphisms}
We give a logical description of the pure monomorphisms. This is the only definition that we will use. Still, for completeness' sake we also give the purely categorical characterisation (Remark \ref{categorical-pure-monomorphism}).
\begin{defi}
\label{pp-formula}
Let $\L$ be a signature. Then a \emph{positive primitive formula} or \emph{pp-formula} is one that is built from atomic formulas, (finite) conjunctions and (finite lists of) existential quantifiers.
\end{defi}
Up to logical equivalence, every pp-formula $\varphi(x_1, \ldots, x_n)$ can be written as
\[
\exists y_1 \ldots y_k \psi(x_1, \ldots, x_n, y_1, \ldots, y_k),
\]
where $\psi(x_1, \ldots, x_n, y_1, \ldots, y_k)$ is a conjunction of atomic formulas. In particular, if $\L$ consists only of constant symbols and functions symbols (as is the case when axiomatising a category of algebras, so in particular for presheaves), then $\psi$ is simply a system of equations in variables $x_1, \ldots, x_n, y_1, \ldots, y_k$.

One quickly sees that truth of pp-formulas is preserved upwards by homomorphisms. That is, if $\varphi(x_1, \ldots, x_n)$ is a pp-formula and $f: K \to L$ is a homomorphism of $\L$-structures then for all $a_1, \ldots, a_n \in K$ we have
\[
K \models \varphi(a_1, \ldots, a_n) \implies L \models \varphi(f(a_1), \ldots, f(a_n)).
\]
Being a pure monomorphism is about the converse of the above implication holding.
\begin{defi}
\label{pure-monomorphism}
A homomorphism $f: K \to L$ of presheaves on $\cc$ is called a \emph{pure monomorphism} if for every pp-formula $\varphi(x_1, \ldots, x_n)$ in the language $\L_\cc$ and all $a_1, \ldots, a_n \in K$ we have
\[
K \models \varphi(a_1, \ldots, a_n) \quad \Longleftrightarrow \quad L \models \varphi(f(a_1), \ldots, f(a_n)).
\]
\end{defi}
The name ``monomorphism'' is justified because such $f$ are in particular injective. Just apply the definition to the pp-formula $x_1 = x_2$.
\begin{rem}
\label{categorical-pure-monomorphism}
For completeness' sake we also recall the categorical characterisation of pure monomorphisms \cite[Definition 2.27]{AR}, which makes sense in any category. The equivalence between the two definitions can be seen using the proof of \cite[Proposition 5.15]{AR}, which works for any variety (i.e., category of algebras satisfying a fixed list of equations), such as the category $\Set^\cc$ of presheaves we are interested in.

An arrow $f: A \to B$ is called \emph{pure} if for each commutative square
\[
\xymatrix@=1pc{
  A \ar[r]^f & B \\
  A' \ar[r]_{f'} \ar[u]^u & B' \ar[u]_v
}
\]
where $A'$ and $B'$ are finitely presentable, the arrow $u$ factorises as $u = u'f'$ for some $u': B' \to A$.
\end{rem}
Pure monomorphisms are clearly closed under composition. In particular, in any category we can restrict to the subcategory whose morphisms are pure monomorphisms. Pure monomorphisms in a locally finitely presentable category, such as $\Set^\cc$, are always regular \cite[Proposition 2.31]{AR}, though we will not need this. We list some further facts that will be useful to us. The first two quickly follow from the definition and we provide references for the other two.
\begin{fact}
\label{pure-mono-facts}
The following general facts hold for pure monomorphisms in any locally finitely presentable category. In particular, they hold in our category of interest $\Set^\cc$.
\begin{enumerate}[label=(\roman*)]
\item Any split monomorphism is pure.
\item Pure monomorphisms are left-cancellable: if $gf$ is a pure monomorphism then so is $f$.
\item Pure monomorphisms are stable under pushout \cite[Proposition 15]{AR-pure}. That is, if $f: K \to L$ is a pure monomorphism, then its pushout along any arrow $g: K \to M$ is a pure monomorphism.
\item Pure monomorphisms are continuous \cite[Theorem 2.34]{AR}. That is, all the coprojections of a colimit of a directed diagram of pure monomorphisms are pure monomorphisms. For such a colimit and any other cocone consisting of pure monomorphisms, the induced arrow is a pure monomorphism.

In particular, pure monomorphisms are closed under transfinite composition. That is, for any ordinal $\alpha$, given a smooth chain $(f_{ij}: K_i \to K_j)_{i \leq j < \alpha}$ of pure monomorphisms (smooth means that at limit stages we have colimits), the coprojection $f_0: K_0 \to K$ is a pure monomorphism, where $K = \colim_{i < \alpha} K_i$.
\end{enumerate}
\end{fact}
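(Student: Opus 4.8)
The plan is to work throughout with the logical characterisation of pure monomorphisms in the variety $\Set^\cc$ (Definition \ref{pure-monomorphism}), the fact that homomorphisms preserve pp-formulas upward, and the explicit descriptions of pushouts and of directed colimits recorded in Section \ref{subsec:presheaves}. Clauses (i) and (ii) then need essentially no work. For (i): if $rf=\id_K$ and $L\models\varphi(f(\bar a))$ for a pp-formula $\varphi$ and $\bar a\in K$, apply the homomorphism $r$ to get $K\models\varphi(rf(\bar a))=\varphi(\bar a)$, and the reverse implication always holds; so $f$ is pure. For (ii): if $gf$ is pure and $L\models\varphi(f(\bar a))$, apply $g$ to get $M\models\varphi(gf(\bar a))$, hence $K\models\varphi(\bar a)$ by purity of $gf$; so $f$ is pure.

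Clause (iii) carries the real content, and I would argue directly in $\Set^\cc$ (rather than invoking \cite[Proposition 15]{AR-pure}). Form the pushout $P=(L\sqcup M)/\!\sim$ of $L\xleftarrow{f}K\xrightarrow{g}M$; from the explicit description one sees directly that $f'\colon M\to P$ is injective. Suppose $P\models\varphi(\bar b)$ for $\bar b\in M$, where $\varphi(\bar x)=\exists\bar y\,\bigwedge_j(s_j=t_j)$; after applying the composition axioms, each equation has the form $p\cdot u=q\cdot v$ with $p,q$ arrows of $\cc$ (possibly identities) and $u,v$ among the variables $\bar x,\bar y$. Fix witnesses $\bar e\in P$ and split the quantified variables according to whether their witness lies in $M$ or in $L$. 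An equation whose two sides both evaluate to elements of $M$ already holds in $M$ (as $f'$ is injective); an equation whose two sides both evaluate to elements of $L$ already holds in $L$; and a mixed equation forces its common value into the image of $K$, i.e.\ it equals a specific element $\kappa_j$ coming from $K$. Reading off the equations that involve the $L$-witnessed variables, one obtains a pp-formula $\theta(\bar w)=\exists\bar z\,(\cdots)$ with parameters $\bar\kappa$ in $K$ such that $L\models\theta(f(\bar\kappa))$, witnessed by the $L$-part of $\bar e$. Purity of $f$ gives $K\models\theta(\bar\kappa)$, witnessed by some $\bar d\in K$; transporting $\bar d$ into $M$ via $g$ and leaving the original $M$-witnesses unchanged, one checks that every equation of $\varphi$ now holds in $M$, so $M\models\varphi(\bar b)$ and $f'$ is pure. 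The only extra care is when $g$ is not injective, where ``the image of $K$'' must be read literally and $g$ carried along; no new idea enters, but this bookkeeping is the main nuisance of the argument.

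Finally, clause (iv) follows from the finitary character of pp-formulas. Write $K=\colim_i K_i$ with coprojections $\kappa_i$. If $K\models\varphi(\kappa_0(\bar a))$, then only finitely many witnesses and equations are involved, so everything lies in, and (by injectivity of $K_i\to K$) holds in, some $K_i$; purity of the transition map $K_0\to K_i$ then yields $K_0\models\varphi(\bar a)$, so $\kappa_0$ is pure. Running the same computation with the legs of an arbitrary cocone of pure monomorphisms in place of the $\kappa_i$ shows the induced map out of the colimit is pure. Transfinite composition along a smooth chain $(f_{ij})_{i\le j<\alpha}$ is then an induction: successor steps are pure by hypothesis, and at a limit $\delta$ the map $K_i\to K_\delta$ is a coprojection of a directed colimit of pure monomorphisms, hence pure; in particular $f_0\colon K_0\to K$ is pure. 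I expect the splitting-of-witnesses step in (iii)---and within it the case of a non-injective pushout leg---to be the only place requiring genuine attention.
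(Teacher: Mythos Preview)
The paper does not prove this Fact: each item is simply stated with a citation to \cite{AR} or \cite{AR-pure}, and no argument is given. Your approach is thus genuinely different: instead of invoking the general results for locally finitely presentable categories, you give direct, elementary proofs in $\Set^\cc$ using the logical description of purity (Definition~\ref{pure-monomorphism}) together with the explicit pushout and directed-colimit descriptions from Section~\ref{subsec:presheaves}. This trades generality for self-containment---you only establish the case the paper actually uses, not the full statement for arbitrary locally finitely presentable categories---but that is all the rest of the paper needs.

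Your arguments are correct. Items (i), (ii) and (iv) are routine, as you say. For (iii), note that your sketch as written is literally valid only when $g$ is injective: if $g(k)=g(k')$ with $k\neq k'$ then the map $L\to P$ identifies $f(k)$ with $f(k')$, so an equation between two $L$-witnessed terms can hold in $P$ without holding in $L$, contrary to what the sketch asserts. You rightly anticipate this, and the fix is exactly the bookkeeping you allude to: in that situation both sides of the offending equation lie in $f(K)$, so one replaces the single equation $p\cdot y_i = q\cdot y_j$ in $\theta$ by the pair $p\cdot y_i = k$ and $q\cdot y_j = k'$ with parameters $k,k'\in K$ satisfying $g(k)=g(k')$; after solving $\theta$ in $K$ and transporting the witnesses along $g$, the original equation is recovered in $M$. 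With this adjustment the argument goes through.
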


\subsection{Stable independence and cofibrant generation}
\label{sec:stable-independence}
The main model-theoretic tool in this paper is that of a stable independence relation. The categorical approach to independence was initiated in \cite{LRV}, and we use the same terminology. We only recall the definitions that are important to us, and refer to \cite{LRV} for the rest.
\begin{defi}
\label{weakly-stable-independence}
An independence relation $\ind$ on a category $\ck$ is a collection of commuting squares (invariant under isomorphic squares). The squares in the relation are called \emph{independent squares}. We often leave the arrows involved unlabelled and we write $A \ind_C^D B$ if the square below is independent. Furthermore, we require that in the commuting diagram below we have $A \nf_C^D B$ if and only if $A \ind_C^E B$.
\[
\xymatrix@=1pc{
  A \ar[r] & D \ar[r] & E \\
  C \ar[r] \ar[u] & B \ar[u] &
}
\]
We say that $\ind$ is \emph{weakly stable} if it satisfies symmetry, uniqueness, transitivity and existence \cite[Definitions 3.9, 3.13, 3.15 and 3.10]{LRV}.
\end{defi}
It follows from existence that any square where the bottom or left arrow is an isomorphism is independent \cite[Lemma 3.12]{LRV}. The transitivity property says exactly that when composing two independent squares, the outer rectangle is again independent \cite[Definition 3.15]{LRV}. With these properties, the independence relation itself becomes a category.
\begin{defi}[{\cite[Definition 3.16]{LRV}}]
\label{independence-category}
Given a category $\ck$ we write $\ck^2$ for the category that has as objects arrows from $\ck$ and as arrows the commuting squares in $\ck$. For an independence relation $\ind$ on $\ck$ that satisfies existence and transitivity we then define $\ck_{\ind}$ to be the subcategory of $\ck^2$ where the arrows are independent squares.
\end{defi}
The important property that is missing from a weakly stable independence relation is that the relation is built from a small amount of information. This can be made precise using the language of accessible categories.
\begin{defi}[{\cite[Definition 3.24]{LRV}}]
\label{stable-independence}
Let $\ck$ be a category, equipped with an independence relation $\ind$ that satisfies existence and transitivity. We call $\ind$ \emph{accessible} if $\ck_{\ind}$ is an accessible category. We call $\ind$ \emph{stable} if it is both weakly stable and accessible.
\end{defi}
Given a category $\ck$ and a class of arrows $\cm$ (closed under composition and containing identity arrows) we will write $\ck_\cm$ for the wide subcategory of $\ck$ with arrows from $\cm$. That is, its objects are the same as those in $\ck$ and its arrows are exactly those in $\cm$. For these data, \cite[Section 2]{LRV1} gives us a recipe to construct a weakly stable independence relation using effective squares (called ``cellular squares'' in \cite{LRV1}). We will only be interested in the case where $\cm$ is the class of pure monomorphisms, so we restrict ourselves to that case.
\begin{defi}
\label{pure-effective}
Let $\ck$ be a category with pushouts. A commuting square of pure monomorphisms in $\ck$, such as the outer square below, is called \emph{pure effective} if the induced arrow from the relevant pushout is a pure monomorphism. That is, if the arrow $P \to D$ is a pure monomorphism, where $P$ is the pushout.
\[
\xymatrix@=1.8pc{
 & & D \\
A \ar[r] \ar@/^/[urr] & P \ar@{-->}[ur] \pushout & \\
C \ar[r] \ar[u] & B \ar[u] \ar@/_/[uur] &
}
\]
\end{defi}
\begin{fact}
\label{pure-effective-weakly-stable}
Let $\ck$ be a locally finitely presentable category. Then the pure effective squares form a weakly stable independence relation on $\ck_\pure$.
\end{fact}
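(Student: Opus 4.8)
\noindent\emph{Proof proposal.} This statement is the pure-monomorphism instance of the general construction in \cite[Section 2]{LRV1}, so the plan is simply to verify that the four axioms defining a weakly stable independence relation hold for the class $\cm$ of pure monomorphisms, using only the closure properties recorded in Fact \ref{pure-mono-facts} together with the observations that pure monomorphisms contain all isomorphisms and are closed under composition (immediate from Fact \ref{pure-mono-facts}(i) and Definition \ref{pure-monomorphism}). No accessibility is involved here, so the whole argument stays inside elementary diagram chasing in $\ck$, which is cocomplete because it is locally finitely presentable; throughout, given a span $A \leftarrow C \rightarrow B$ of pure monomorphisms I write $P$ for its pushout in $\ck$, and for a commuting square with apex $D$ the ``comparison map'' is the canonical $P \to D$.

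First the bookkeeping conditions. Invariance under isomorphisms of squares is automatic, as both ``pushout'' and ``pure monomorphism'' are isomorphism-invariant notions. For the monotonicity clause of Definition \ref{weakly-stable-independence} (independence of the inner square is equivalent to independence of the outer one along a map $D \to E$ in $\ck_\pure$), I would note that the comparison map $P \to E$ factors as $P \to D \to E$: if $P \to D$ is pure then so is the composite, and conversely if $P \to E$ is pure then $P \to D$ is pure by left-cancellability (Fact \ref{pure-mono-facts}(ii)). Symmetry is immediate since $P$ does not depend on the order of the span. Existence is witnessed by the pushout square itself: the coprojections $A \to P$ and $B \to P$ are pure by Fact \ref{pure-mono-facts}(iii), and the comparison map $P \to P$ is the identity, hence pure by Fact \ref{pure-mono-facts}(i). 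For uniqueness, given two independent amalgams $D$ and $D'$ of the same span, with pure comparison maps $p \colon P \to D$ and $p' \colon P \to D'$, I would form $E$ as the pushout of $D \xleftarrow{p} P \xrightarrow{p'} D'$; the induced maps $D \to E$ and $D' \to E$ are pure by Fact \ref{pure-mono-facts}(iii), and they are compatible over the span since the structure maps from $A$ and from $B$ into $D$ and $D'$ all factor through $P$ --- which is exactly what \cite[Definition 3.13]{LRV} demands.

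The real content is transitivity, and this is where I expect the main obstacle to lie. One must show that a composite of two pure effective squares, arranged as in \cite[Definition 3.15]{LRV}, is again pure effective. The tool is the pasting law for pushouts: writing $P_1 \to D_1$ and $P_2 \to D_2$ for the comparison maps of the two constituent squares, the pushout $P$ of the outer span can be assembled in two stages so that the outer comparison map $P \to D_2$ factors, under the pasting isomorphism, as a pushout of $P_1 \to D_1$ followed by the comparison map $P_2 \to D_2$; both factors are then pure by Fact \ref{pure-mono-facts}(iii) and closure under composition, so $P \to D_2$ is pure. The delicate part is purely organisational --- keeping track of which intermediate pushout is being compared with which cocone so that the pasting diagram matches the precise formulation of transitivity in \cite[Definition 3.15]{LRV} --- but once the diagram is set up correctly the conclusion drops out. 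Assembling these checks shows that the pure effective squares satisfy symmetry, uniqueness, transitivity and existence, i.e.\ form a weakly stable independence relation on $\ck_\pure$.
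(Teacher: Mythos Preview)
Your proposal is correct and follows the same approach as the paper: both identify this as an instance of \cite[Theorem 2.7]{LRV1}, with Fact \ref{pure-mono-facts} supplying the required closure properties of pure monomorphisms. The only difference is that the paper simply invokes that theorem, whereas you additionally sketch its proof (the verification of symmetry, existence, uniqueness and the pushout-pasting argument for transitivity); your sketch is accurate.
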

\begin{proof}
This is \cite[Theorem 2.7]{LRV1}. Fact \ref{pure-mono-facts} tells us that the conditions for that theorem are satisfied (i.e., $(\ck, \pure)$ is a cellular category).
\end{proof}
We finish this section with the link between stable independence and cofibrant generation. The essence is that the former is all about the independence relation being built from a small amount of information, while the latter means that a fixed class of arrows is generated from a small amount of information.
\begin{defi}[{\cite[Remark 1.2]{B}}]
\label{cofibrant-generation}
Let $\cm$ be a class of arrows in a category $\ck$.
\begin{itemize}
\item We write $\Po(\cm)$ for the closure of $\cm$ under pushouts (in $\ck$).
\item We write $\Tc(\cm)$ for the closure of $\cm$ under transfinite composition (in $\ck$).
\item We write $\Rt(\cm)$ for the closure of $\cm$ under retracts (in $\ck^2$).
\end{itemize}
We call $\cm$ \emph{cofibrantly generated} if there is a set $\cx$ of arrows such that $\cm = \Rt(\Tc(\Po(\cx)))$.
\end{defi}
We can now state the equivalence from \cite[Theorem 3.1]{LRV1} between cofibrant generation and admitting a stable independence relation. We will simplify the statement for our case of interest: pure monomorphisms in a locally finitely presentable category.
\begin{fact}
\label{cofibrant-generation-iff-stable}
Let $\ck$ be a locally finitely presentable category. Then the following are equivalent:
\begin{enumerate}[label=(\roman*)]
\item $\ck_\pure$ has a stable independence relation,
\item the pure effective squares form a stable independence relation on $\ck_\pure$,
\item the pure monomorphisms are cofibrantly generated in $\ck$.
\end{enumerate}
\end{fact}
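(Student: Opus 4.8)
The plan is to reduce the statement to \cite[Theorem 3.1]{LRV1}, which in its general form treats an arbitrary \emph{cellular category} $(\ck, \cm)$ and asserts exactly the three-way equivalence between $\ck_\cm$ admitting a stable independence relation, the effective (``cellular'') squares forming such a relation on $\ck_\cm$, and $\cm$ being cofibrantly generated in $\ck$. So the only real work is to instantiate that theorem at $\cm = \pure$.

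First I would check that $(\ck, \pure)$ is a cellular category whenever $\ck$ is locally finitely presentable --- this is the same verification already invoked in the proof of Fact \ref{pure-effective-weakly-stable}. Indeed $\ck$ is cocomplete, and by Fact \ref{pure-mono-facts} the class $\pure$ contains all isomorphisms (even all split monomorphisms, part (i)), is closed under composition and transfinite composition (part (iv)), is left-cancellable (part (ii)), and is stable under pushout (part (iii)). Next, observe that for $\cm = \pure$ the ``cellular squares'' of \cite{LRV1} are by definition precisely the pure effective squares of Definition \ref{pure-effective}: a commuting square of pure monomorphisms is cellular exactly when the comparison map out of the pushout again lies in $\pure$. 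With these two identifications, the statement of \cite[Theorem 3.1]{LRV1} read at $\cm = \pure$ is literally the equivalence of (i), (ii) and (iii).

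It is worth isolating how the implications fit together. The implication (ii) $\Rightarrow$ (i) is immediate, and (ii) $\Leftrightarrow$ (iii) together with (iii) $\Rightarrow$ (i) are supplied directly by the cited theorem. For (i) $\Rightarrow$ (ii) --- i.e.\ to see that any stable independence relation as in (i) must be the effective-square one --- one combines Fact \ref{pure-effective-weakly-stable}, which already gives that the pure effective squares form a weakly stable independence relation, with the canonicity of stable independence relations from \cite{LRV}: a category admits at most one stable independence relation, and when one exists it must coincide with the one given by effective squares, so the missing accessibility of $\ck_\pure$ transfers to the canonical relation. I do not anticipate a genuine obstacle here --- the statement is a specialization of a known theorem --- and the only care needed is the bookkeeping of matching the ``cellular''/``effective'' terminology and confirming the cellular-category hypotheses of \cite{LRV1}, which Fact \ref{pure-mono-facts} provides.
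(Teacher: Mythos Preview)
Your proposal is correct and follows essentially the same route as the paper: both reduce directly to \cite[Theorem 3.1]{LRV1} and verify its hypotheses via Fact \ref{pure-mono-facts}. The only minor omission is that the paper explicitly notes closure of $\pure$ under retracts (an additional hypothesis of \cite[Theorem 3.1]{LRV1} beyond being cellular), but this follows immediately from the properties you already listed, namely parts (i) and (ii) of Fact \ref{pure-mono-facts}.
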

\begin{proof}
The fact that all assumptions for \cite[Theorem 3.1]{LRV1} are satisfied is Fact \ref{pure-mono-facts}, where closure under retracts quickly follows from (i) and (ii) in Fact \ref{pure-mono-facts}.
\end{proof}

\section{Bad configurations}
\label{sec:bad-configurations}
A common theme in model theory is to identify a certain bad combinatorial configuration, and declare that we do not wish that to happen. We will identify such configurations for presheaves, and these will come up as intermediate steps in the proof of our main theorem. Such bad configurations are of infinite size. Importantly, using the compactness theorem, we can then find such configurations of arbitrarily big sizes. So we begin by recalling the compactness theorem.
\begin{fact}[{Compactness theorem, e.g., \cite[Theorem 2.2.1]{TZ}}]
\label{compactness}
Let $\L$ be a signature and let $\Sigma$ be a set of $\L$-sentences. If for every finite $\Sigma_0 \subseteq \Sigma$ there is an $\L$-structure $M$ such that $M \models \Sigma_0$ then there is an $\L$-structure $N$ such that $N \models \Sigma$.
\end{fact}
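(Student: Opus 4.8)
This is the compactness theorem for first-order logic. The plan is to prove it via the ultraproduct construction; the alternative route through G\"odel's completeness theorem also works, but the ultraproduct argument is more self-contained.

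First I would let $I$ denote the set of all finite subsets of $\Sigma$ and, using the hypothesis, fix for each $i \in I$ an $\L$-structure $M_i$ with $M_i \models i$. For each $\sigma \in \Sigma$ set $\widehat{\sigma} = \{ i \in I : \sigma \in i \}$, so that $M_i \models \sigma$ whenever $i \in \widehat{\sigma}$. The family $\{ \widehat{\sigma} : \sigma \in \Sigma \}$ has the finite intersection property, because $\{ \sigma_1, \dots, \sigma_n \} \in \widehat{\sigma_1} \cap \dots \cap \widehat{\sigma_n}$. By the ultrafilter lemma I would extend this family to an ultrafilter $\cu$ on $I$, and then form the ultraproduct $N = \bigl( \prod_{i \in I} M_i \bigr) / \cu$. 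By the fundamental theorem on ultraproducts, for every $\L$-sentence $\sigma$ one has $N \models \sigma$ if and only if $\{ i \in I : M_i \models \sigma \} \in \cu$. For $\sigma \in \Sigma$ we have $\widehat{\sigma} \subseteq \{ i : M_i \models \sigma \}$ and $\widehat{\sigma} \in \cu$, hence $\{ i : M_i \models \sigma \} \in \cu$, and therefore $N \models \sigma$. Thus $N \models \Sigma$, as required.

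The main obstacle — and the place where a full write-up must spend its effort — is establishing the fundamental theorem on ultraproducts, which goes by induction on the complexity of formulas: atomic formulas are handled by the definition of the interpretations of function and relation symbols in the ultraproduct, conjunction and disjunction use that $\cu$ is closed under finite intersections and supersets, negation uses that $\cu$ is an ultrafilter (exactly one of a set and its complement belongs to it), and the existential-quantifier step needs, for each relevant index $i$, a choice of a witness $a_i \in M_i$ satisfying the matrix. Beyond this, the only non-elementary ingredient is the ultrafilter lemma used to produce $\cu$; no further choice is needed. If one instead prefers the syntactic route, finite satisfiability implies that $\Sigma$ is consistent (a derivation of a contradiction would use only finitely many sentences of $\Sigma$, contradicting satisfiability of that finite subset via soundness), and a consistent theory has a model by a Henkin term-model construction — there the Henkin construction plays the role of the main obstacle.
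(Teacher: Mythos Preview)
Your ultraproduct argument is correct and is one of the two standard proofs of compactness. However, the paper does not give its own proof of this statement: it is recorded as a \emph{Fact} with a reference to \cite[Theorem 2.2.1]{TZ}, and no argument is supplied. So there is nothing to compare your approach against; you have simply written out a proof where the paper was content to cite one.
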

\begin{defi}
\label{complete-bipartite-fg-graph}
Let $f,g \in \cc$, and let $K$ be a presheaf on $\cc$. An \emph{$(f,g)$-interpreted complete bipartite graph in $K$} consists of infinite sets $A, B \subseteq K$ such that for all $a \in A$ and $b \in B$ there is $c \in K$ with $f \cdot c = a$ and $g \cdot c = b$.

We say that $K$ \emph{interprets a complete bipartite graph} if there are $f, g \in \cc$ such that $K$ contains an $(f,g)$-interpreted complete bipartite graph.
\end{defi}
The terminology in the above definition comes from the following. The formula $\exists z(f \cdot z = x \wedge g \cdot z = y)$ defines an edge relation between $x$ and $y$, which turns $(A, B)$ into the homomorphic image of a complete bipartite graph.
\begin{lemma}
\label{complete-bipartite-fg-graph-equivalences}
The following are equivalent for $f, g \in \cc$.
\begin{enumerate}[label=(\roman*)]
\item There is a presheaf $K$ containing an $(f,g)$-interpreted complete bipartite graph.
\item For every infinite cardinal $\lambda$ there is a presheaf $K$ containing a $(f,g)$-interpreted complete bipartite graph $(A, B)$ such that $|A| = |B| = \lambda$.
\end{enumerate}
\end{lemma}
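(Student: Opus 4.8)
The implication from (ii) to (i) is immediate: specialise (ii) to any infinite cardinal, e.g.\ $\lambda=\aleph_0$. The content is (i)$\,\Rightarrow\,$(ii), and the plan is a routine upward compactness argument of the kind used throughout model theory to inflate a small configuration to an arbitrarily large one; the tool is the compactness theorem (Fact \ref{compactness}).

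So fix an infinite cardinal $\lambda$ and a presheaf $K$ containing an $(f,g)$-interpreted complete bipartite graph $(A,B)$. Here $f$ and $g$ necessarily share a common domain, say $f\colon X\to Y$ and $g\colon X\to Z$, since $f\cdot c$ and $g\cdot c$ must make sense for a single $c$. I would expand $\L_\cc$ by adding new constant symbols $a_i$ of sort $Y$ and $b_i$ of sort $Z$ for each $i<\lambda$, together with a new constant symbol $c_{i,j}$ of sort $X$ for each pair $i,j<\lambda$. Let $\Sigma$ be the theory in this expanded signature consisting of: the equational axioms of presheaves on $\cc$ from Definition \ref{presheaf-as-multisorted-unary-algebra}; the sentences $a_i\neq a_{i'}$ and $b_i\neq b_{i'}$ for all $i\neq i'$; and the sentences $f\cdot c_{i,j}=a_i$ and $g\cdot c_{i,j}=b_j$ for all $i,j<\lambda$. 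Any model $N$ of $\Sigma$ then does the job: its reduct to $\L_\cc$ is a presheaf on $\cc$, the sets $A'=\{a_i^N:i<\lambda\}$ and $B'=\{b_j^N:j<\lambda\}$ have cardinality exactly $\lambda$ by the inequality axioms, and given $a=a_i^N\in A'$ and $b=b_j^N\in B'$ the element $c_{i,j}^N$ witnesses $f\cdot c_{i,j}^N=a$ and $g\cdot c_{i,j}^N=b$, so $(A',B')$ is an $(f,g)$-interpreted complete bipartite graph of the required size.

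It therefore remains to check that $\Sigma$ is finitely satisfiable, and here one simply uses $K$ itself (suitably expanded). A finite $\Sigma_0\subseteq\Sigma$ mentions only finitely many of the new constants; since $A$ and $B$ are infinite, I would choose a one-to-one interpretation of the finitely many relevant $a_i$'s by elements of $A$ and of the finitely many relevant $b_i$'s by elements of $B$, and then, for each relevant $c_{i,j}$, invoke the completeness of the bipartite structure in $K$ to pick $c\in K$ with $f\cdot c$ and $g\cdot c$ equal to the chosen interpretations of the associated $a_i$ and $b_j$ (picking an arbitrary element of the other side if only one of the two axioms on $c_{i,j}$ lies in $\Sigma_0$); leftover constants are interpreted arbitrarily. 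This expansion of $K$ satisfies $\Sigma_0$, so compactness yields a model of all of $\Sigma$, completing the proof. The only point needing a little care is the bookkeeping in this last step — ensuring that whenever $c_{i,j}$ occurs in $\Sigma_0$ the constants appearing in its defining equations have been assigned interpretations drawn from $A$ and $B$ — but there is no genuine obstacle here.
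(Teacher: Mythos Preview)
Your proof is correct and follows essentially the same compactness argument as the paper. The only cosmetic difference is that you Skolemise by introducing explicit witness constants $c_{i,j}$, whereas the paper keeps the witnesses implicit via the existential sentences $\exists z(f\cdot z=a_i\wedge g\cdot z=b_j)$; both variants are standard and equivalent.
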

\begin{proof}
\underline{(i) $\Rightarrow$ (ii)} We prove this by a standard argument using the compactness theorem (Fact \ref{compactness}), for which we give some details. Let $K$ be a presheaf containing a $(f,g)$-interpreted complete bipartite graph $(A, B)$. Extend the signature $\L_\cc$ to $\L'$ by adding new constant symbols $(a_i)_{i < \lambda}$ and $(b_i)_{i < \lambda}$. Let $T$ be the set of $\L_\cc$-sentences that axiomatises presheaves (see Definition \ref{presheaf-as-multisorted-unary-algebra}). Define the set $\Sigma'$ of $\L'$-sentences as:
\[
\{ \exists z(f \cdot z = a_i \wedge g \cdot z = b_j) : i,j < \lambda \} \cup \{a_i \neq a_{i'} : i < i' < \lambda \} \cup \{b_j \neq b_{j'} : j < j' < \lambda\},
\]
and set $\Sigma = T \cup \Sigma'$. For any finite $\Sigma_0 \subseteq \Sigma$ the new constant symbols can be interpreted in $K$ (specifically, in $A$ and $B$) such that $K \models \Sigma_0$. By the compactness theorem (Fact \ref{compactness}) there is an $\L'$-structure $N$ such that $N \models \Sigma$. Set $A' = \{a_i \in N : i < \lambda\}$ and $B' = \{b_i \in N : i < \lambda\}$, then $N$ restricted to $\L_C$ is the desired presheaf with $(A', B')$ as the $(f,g)$-interpreted complete bipartite graph.

\underline{(ii) $\Rightarrow$ (i)} Immediate.
\end{proof}
\begin{defi}
\label{fg-order-property}
Let $f,g \in \cc$, and let $K$ be a presheaf. We say that $K$ has the \emph{$(f, g)$-order property} if there are sequences $(a_i)_{i < \omega}$ and $(b_i)_{i < \omega}$ of elements of $K$ such that the following holds. There is $c \in K$ with $f \cdot c = a_i$ and $g \cdot c = b_j$ if and only if $i \leq j$.

We say that $K$ has the \emph{span-induced order property} if there are $f,g \in \cc$ such that $K$ has the $(f, g)$-order property.
\end{defi}
The definition of the $(f, g)$-order property can also be expressed as
\[
K \models \varphi_{f,g}(a_i, b_j) \quad \Longleftrightarrow \quad i \leq j < \omega,
\]
where $\varphi_{f,g}(x, y)$ is the formula $\exists z(f \cdot z = x \wedge g \cdot z = y)$. So this is exactly saying that $\varphi_{f,g}(x, y)$ has the model-theoretic order property (see e.g., \cite[Theorem II.2.2]{Shelah}).
\begin{lemma}
\label{order-property-implies-complete-bipartite-graph}
If there is a presheaf that has the span-induced order property then there is a presheaf that interprets a complete bipartite graph.
\end{lemma}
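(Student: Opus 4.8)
The plan is to argue by compactness, in essentially the same way as the proof of Lemma~\ref{complete-bipartite-fg-graph-equivalences}(i)$\Rightarrow$(ii). Suppose some presheaf $K$ has the span-induced order property, say the $(f,g)$-order property witnessed by sequences $(a_i)_{i<\omega}$ and $(b_j)_{j<\omega}$. The key observation is that, although one cannot extract an \emph{infinite} $(f,g)$-interpreted complete bipartite graph directly from these sequences (there are no infinite $I,J\subseteq\omega$ with $i\le j$ for all $i\in I$, $j\in J$, since such a condition forces $I$ to be bounded), one does get arbitrarily large \emph{finite} ones for free: for any $N<\omega$ and any $i<N\le j$ we have $i\le j$, so by the order property there is $c\in K$ with $f\cdot c=a_i$ and $g\cdot c=b_j$; hence $\{a_i : i<N\}$ and $\{b_j : N\le j<2N\}$ form a complete bipartite graph of size $N$. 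That is precisely the finite information compactness needs.

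Concretely, first I would note that the $a_i$ are pairwise distinct and the $b_j$ are pairwise distinct: if $i<i'$ then $K\models\varphi_{f,g}(a_i,b_i)$ while $K\not\models\varphi_{f,g}(a_{i'},b_i)$ (as $i'\not\le i$), so $a_i\ne a_{i'}$, and symmetrically for the $b_j$. Next I would expand $\L_\cc$ by new constants $\bar a_i,\bar b_j$ ($i,j<\omega$, of the appropriate sorts) and let $\Sigma$ consist of the presheaf axioms $T$ together with $\{\exists z(f\cdot z=\bar a_i\wedge g\cdot z=\bar b_j) : i,j<\omega\}$, $\{\bar a_i\ne\bar a_{i'} : i<i'<\omega\}$, and $\{\bar b_j\ne\bar b_{j'} : j<j'<\omega\}$. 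Given a finite $\Sigma_0\subseteq\Sigma$, which mentions only $\bar a_i$ for $i$ in some finite $P\subseteq\omega$ and $\bar b_j$ for $j$ in some finite $Q\subseteq\omega$, I would put $N=\max(P\cup Q)+1$ and interpret $\bar a_i$ as $a_i$ and $\bar b_j$ as $b_{j+N}$ inside $K$; by the block observation together with distinctness, $K\models\Sigma_0$ under this interpretation. Compactness (Fact~\ref{compactness}) then yields a model of $\Sigma$ whose reduct to $\L_\cc$ is a presheaf in which $A=\{\bar a_i\}_{i<\omega}$ and $B=\{\bar b_j\}_{j<\omega}$ form an infinite $(f,g)$-interpreted complete bipartite graph (infinite by the distinctness axioms).

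I do not expect a serious obstacle here: the only point requiring care is the realization that the configuration cannot be found inside $K$ itself and must instead be obtained by feeding the arbitrarily large ``lower blocks'' of the half graph into compactness — after which the argument is a routine copy of the one used for Lemma~\ref{complete-bipartite-fg-graph-equivalences}. It is worth remarking that the argument in fact produces a complete bipartite graph interpreted by the \emph{same} pair $(f,g)$; alternatively, one could phrase the conclusion as a direct application of Lemma~\ref{complete-bipartite-fg-graph-equivalences} to these finite approximations.
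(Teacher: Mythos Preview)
Your proposal is correct and follows essentially the same compactness strategy as the paper; the only cosmetic difference is that the paper first uses compactness to stretch the half-graph to index set $2\omega$ and then takes $A=\{a_i:i<\omega\}$, $B=\{b_j:\omega\le j<2\omega\}$, whereas you axiomatize the bipartite graph directly and realize finite fragments via the same ``lower block / upper block'' trick. Both arguments rest on the identical observation and the same appeal to Fact~\ref{compactness}.
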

\begin{proof}
Let $f,g \in \cc$ such that $K$ has the $(f, g)$-order property. By the compactness theorem (following an argument similar to Lemma \ref{complete-bipartite-fg-graph-equivalences}(i) $\Rightarrow$ (ii)), we may assume that there are sequences $(a_i)_{i < 2\omega}$ and $(b_i)_{i < 2\omega}$ in $K$ such that there is $c \in K$ with $f \cdot c = a_i$ and $g \cdot c = b_j$ if and only if $i \leq j$. Let $A = \{ a_i : i < \omega \}$ and $B = \{ b_i : \omega \leq i < 2 \omega \}$. Then $(A, B)$ is an $(f, g)$-interpreted complete bipartite graph.
\end{proof}
As mentioned in the introduction of this section, the combinatorial properties we defined---interpreting a complete bipartite graph and having the span-induced order property---are bad things.
\begin{theo}
\label{stability-implies-no-complete-bipartite-graph}
If the pure effective squares form a stable independence relation in $\Set^\cc_\pure$ then there is no presheaf that interprets a complete bipartite graph.
\end{theo}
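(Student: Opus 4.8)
The strategy is to prove the contrapositive: from a presheaf interpreting a complete bipartite graph I will manufacture a pure monomorphism of $\Set^\cc$ lying outside $\Rt(\Tc(\Po(\cx)))$ for every set $\cx$, so that the pure monomorphisms are not cofibrantly generated and hence, by Fact~\ref{cofibrant-generation-iff-stable}, the pure effective squares are not a stable independence relation. Fix $f\colon X\to Y$ and $g\colon X\to Z$ in $\cc$ witnessing a complete bipartite graph. Using Lemma~\ref{complete-bipartite-fg-graph-equivalences} I fix, for a cardinal $\lambda$ to be chosen large, a presheaf $K$ containing an $(f,g)$-interpreted complete bipartite graph $(A,B)$ with $A=\{a_i:i<\lambda\}$ and $B=\{b_j:j<\lambda\}$ and witnesses $c_{ij}$, $f\cdot c_{ij}=a_i$, $g\cdot c_{ij}=b_j$. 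Assume for contradiction that $\pure=\Rt(\Tc(\Po(\cx)))$ for some set $\cx$ of pure monomorphisms; let $\kappa$ bound the cardinalities of all domains and codomains of arrows of $\cx$, and take $\lambda>\kappa$.

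The bad map adjoins one ``generic'' new $Z$-vertex adjacent to all of $A$: inside an extension $K'\supseteq K$ add an element $b^*$ of sort $Z$ and elements $c^*_i$ of sort $X$ ($i<\lambda$) with $f\cdot c^*_i=a_i$, $g\cdot c^*_i=b^*$, closing under $\cc$, and let $m\colon K\hookrightarrow K'$ be the inclusion. An ``index-swap'' computation shows $m$ is pure: a system of equations over $K$ that is solvable in $K'$ is already solvable in $K$, since any occurrence of $b^*$ (resp.\ $c^*_i$) in a solution may be replaced by the already-present vertex $b_0$ (resp.\ by $c_{i0}$) without disturbing any equation. Note $K'$ still contains an $(f,g)$-interpreted complete bipartite graph whose $Y$-side $A$ lies in $\mathrm{im}(m)$ while $b^*\notin\mathrm{im}(m)$.

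Now I show $m\notin\Rt(\Tc(\Po(\cx)))$. First suppose $m$ is a transfinite composite of pushouts of arrows of $\cx$, say $K=L_0\hookrightarrow L_1\hookrightarrow\cdots\hookrightarrow L_\eta=K'$ with $L_{t+1}=L_t\sqcup_{A_t}B_t$ a pushout of some $A_t\hookrightarrow B_t$ in $\cx$ and colimits at limit stages. Then each $L_t\hookrightarrow L_{t+1}$, being a pushout of a pure monomorphism, is pure (Fact~\ref{pure-mono-facts}(iii)), hence so is each $L_t\hookrightarrow L_\eta$ (Fact~\ref{pure-mono-facts}(iv)); and $L_{t+1}$ is a quotient of $L_t\sqcup B_t$, so $|L_{t+1}\setminus L_t|\le|B_t|\le\kappa$. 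Let $s$ be the stage at which $b^*$ is born, so $b^*\in L_{s+1}\setminus L_s$ (and $a_i\in L_0\subseteq L_{s+1}$). For each $i<\lambda$ the pp-formula $\varphi_{f,g}(x,y)=\exists z\,(f\cdot z=x\wedge g\cdot z=y)$ holds of $(a_i,b^*)$ in $K'$, hence already in $L_{s+1}$ since $L_{s+1}\hookrightarrow K'$ is pure; pick a witness $z_i\in L_{s+1}$. Since $g\cdot z_i=b^*\notin L_s$ we get $z_i\notin L_s$, and since $f\cdot z_i=a_i$ the $z_i$ are pairwise distinct, so $L_{s+1}\setminus L_s$ contains $\lambda$ distinct elements, contradicting $|L_{s+1}\setminus L_s|\le\kappa<\lambda$. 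In general, if $m$ is a retract in $(\Set^\cc)^2$ of such a transfinite composite $n\colon P\hookrightarrow Q$, transporting $A$, $b^*$ and the $c^*_i$ into $Q$ along the retract diagram yields an $(f,g)$-interpreted complete bipartite graph of size $\lambda$ in $Q$ whose $Y$-side lies in $\mathrm{im}(n)$; a one-line chase with the retraction maps shows the image of $b^*$ lies outside $\mathrm{im}(n)$ (else $b^*\in\mathrm{im}(m)=K$); the previous paragraph's argument then applies to $n$ verbatim. Hence $m\notin\Rt(\Tc(\Po(\cx)))=\pure$ — absurd, as $m$ is a pure monomorphism.

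The steps I expect to require the most care are the verification that $m$ is pure (making the index-swap rigorous, i.e.\ checking that the substitution $b^*\mapsto b_0$, $c^*_i\mapsto c_{i0}$ really preserves each equation, which uses completeness of the bipartite graph) and the bookkeeping in the retract case (ensuring the newly adjoined $Z$-vertex cannot be absorbed into $\mathrm{im}(n)$). Everything else is routine: the compactness-driven scaling of the configuration (Lemma~\ref{complete-bipartite-fg-graph-equivalences}, Fact~\ref{compactness}), the choice of $\kappa$, and the facts that pushouts and transfinite composites of pure monomorphisms are pure (Fact~\ref{pure-mono-facts}), which is exactly the ``pp-rigidity'' that forces the size contradiction.
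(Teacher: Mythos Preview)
Your argument is correct and takes a genuinely different route from the paper's. The paper works directly with the accessibility of $\Set^\cc_{\pure,\ind}$: it picks a large $(f,g)$-bipartite graph inside some $K$, writes $K$ as a directed colimit to find a $\lambda^+$-presentable pure subobject $L$ containing many $a$'s, then uses $\mu^+$-accessibility of the independence category to find a small pure-effective square $L_0\to K_0$ below $L\to K$ with a chosen $b\in K_0\setminus L$; picking $a\in (L\cap A)\setminus K_0$, the pushout $P$ of $L\leftarrow L_0\to K_0$ cannot contain a single generator hitting both $a$ and $b$, so $P\to K$ fails purity, contradicting pure-effectiveness. Your approach instead passes through Fact~\ref{cofibrant-generation-iff-stable} and exhibits a concrete pure monomorphism that cannot be assembled from any small set $\cx$: this is more constructive and yields an explicit obstruction, at the cost of invoking the (black-box) equivalence with cofibrant generation and of a slightly longer case analysis (transfinite composition plus retracts).

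Two points deserve more care than you indicate. First, ``closing under~$\cc$'' should be read as the \emph{free} extension: $K'$ is the quotient of $K\sqcup\bigl(\coprod_{i<\lambda}\cc(X,-)\bigr)\sqcup\cc(Z,-)$ by the congruence generated by $f\cdot c^*_i\sim a_i$ and $g\cdot c^*_i\sim b^*$. With this reading your index-swap is literally a retraction $r\colon K'\to K$ sending $c^*_i\mapsto c_{i0}$ and $b^*\mapsto b_0$; the relations are preserved, so $m$ is \emph{split} and hence pure by Fact~\ref{pure-mono-facts}(i) --- easier than you suggest. Second, and more importantly, the assertion $b^*\notin\operatorname{im}(m)$ is not automatic from the free construction and is essential to your ``birth-stage'' argument. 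It is true, and the cleanest way to see it is to produce a \emph{second} retraction $s\colon K'\to K$ with $c^*_i\mapsto c_{i1}$ and $b^*\mapsto b_1$; if $b^*$ lay in $K$ then $b_0=r(b^*)=b^*=s(b^*)=b_1$, contradicting $|B|=\lambda>1$. Once this is in place, the transfinite-composition and retract bookkeeping you outline go through exactly as written.
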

\begin{proof}
Let $\ind$ be the independence relation on $\Set^\cc_\pure$ given by pure effective squares. We will freely use that the inclusions $\Set^\cc_\pure \hookrightarrow \Set^\cc$ and $\Set^\cc_{\pure,\ind} \hookrightarrow (\Set^\cc)^2$ preserve directed colimits (see Fact \ref{pure-mono-facts}(iv) and \cite[Lemma 2.11]{LRV1}), and that directed colimits in $(\Set^\cc)^2$ are computed component-wise (see \cite[Exercise 2c]{AR}). Hence all directed colimits are just computed by taking unions.

Restricting to pure monorphisms in a finitely accessible category yields an accessible category, by \cite[Theorem 2.34]{AR}. So $\Set^\cc_\pure$ is accessible. For any set of accessible categories there are arbitrarily large cardinals $\lambda$ such that every category in that set is $\lambda$-accessible, by \cite[Theorem 2.19]{AR}. There are thus regular cardinals $\mu < \lambda$, with $\mu$ bigger than the sum of the cardinalities of the sets of objects and arrows in $\cc$, such that $\Set^\cc_\pure$ is $\lambda^+$-accessible and $\Set^\cc_{\pure,\ind}$ is $\mu^+$-accessible. In fact, \cite[Theorem 2.19]{AR} also applies to functors, so we may assume that the inclusion functors $\Set^\cc_\pure \hookrightarrow \Set^\cc$ and $\Set^\cc_{\pure,\ind} \hookrightarrow (\Set^\cc)^2$ preserve $\lambda^+$-presentable and $\mu^+$-presentable objects respectively.

Suppose for a contradiction that there is a presheaf that interprets a complete bipartite graph. By Lemma \ref{complete-bipartite-fg-graph-equivalences} there is a presheaf $K$ containing an $(f,g)$-interpreted complete bipartite graph $(A, B)$, for some $f,g \in \cc$, with $|A| = \lambda^+$ and $|B| = \lambda^+$.

In what follows, we will view all pure monomorphisms as actual inclusions maps to simplify notation. As $\Set^\cc_\pure$ is $\lambda^+$-accessible, there is a pure monomorphism $u: L \to K$, where $L$ is $\lambda^+$-presentable (in $\Set^\cc_\pure$, and hence in $\Set^\cc$) and such that $L$ contains $\lambda$ many elements from $A$.\footnote{For the reader unfamiliar with this kind of argument, we explain how this can be done. By definition of being a $\lambda^+$-accessible category we can write $K$ as a $\lambda^+$-directed colimit $K = \colim_{i \in I} K_i$ of $\lambda^+$-presentable objects in $\Set^\cc_\pure$. Directed colimits in $\Set^\cc_\pure$ are computed as in $\Set^\cc$, i.e., simply as the union. Pick $\lambda$ many elements $(a_j)_{j < \lambda}$ in $A \subseteq K$. For each $j < \lambda$ there is $i_j \in I$ such that $a_j \in K_{i_j}$. By $\lambda^+$-directedness there is $i \in I$ with $i \geq i_j$ for all $j < \lambda$. We can now take $L = K_i$.} In particular, we have $|L| = |L \cap A| = \lambda$. As $|B| = \lambda^+ > \lambda = |L|$ there must be $b \in B \setminus L$. Since $\Set^\cc_{\pure,\ind}$ is $\mu^+$-accessible, there is a pure effective square
\[
\xymatrix@=1pc{
  L \ar[r]^{u} & K \\
  L_0 \ar [u]^{v} \ar[r]_{u_0} &
  K_0 \ar[u]_{w}
}
\]
with $b \in K_0$ and $u_0: L_0 \to K_0$ is $\mu^+$-presentable in $\Set^\cc_{\pure,\ind}$. So $u_0$ is $\mu^+$-presentable in $(\Set^\cc)^2$, which means precisely that $L_0$ and $K_0$ are both $\mu^+$-presentable in $\Set^\cc$ (see \cite[Exercise 2.c]{AR}). That is, $|L_0| \leq \mu$ and $|K_0| \leq \mu$. Considering cardinalities again, there must be $a \in (L \cap A) \setminus K_0$.

Hence, in the pushout
\[
\xymatrix@=1pc{
  L \ar[r] & P \\
  L_0 \ar [u]^{v} \ar[r]_{u_0} &
  K_0 \ar[u]
}
\]
there is no $z$ such that $a, b \in \gen{z}$, because that would imply $z \in L$ or $z \in K_0$ and hence $b \in L$ or $a \in K_0$. So $P \not \models \exists z(f \cdot z = a \wedge g \cdot z = b)$ while $K \models \exists z(f \cdot z = a \wedge g \cdot z = b)$, and so the induced arrow $P \to K$ is not pure.
\end{proof}
\begin{theo}
\label{no-order-property-implies-llp}
If there is no presheaf that has the span-induced order property then $\cc$ is a locally linearly preordered category.
\end{theo}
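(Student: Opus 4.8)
The plan is to prove the contrapositive: assuming $\cc$ is \emph{not} locally linearly preordered, I will construct a presheaf with the span-induced order property, contradicting the hypothesis. By assumption there is a span $Y \xleftarrow{f} X \xrightarrow{g} Z$ in $\cc$ such that $g \neq hf$ for every $h : Y \to Z$ and $f \neq h'g$ for every $h' : Z \to Y$; equivalently, $f$ does not factor through $g$ (by any map $Z \to Y$) and $g$ does not factor through $f$ (by any map $Y \to Z$). Since $f$ and $g$ share the domain $X$, the formula $\varphi_{f,g}(x,y) = \exists z(f \cdot z = x \wedge g \cdot z = y)$ is legitimate, and it suffices to exhibit a presheaf in which $\varphi_{f,g}$ has the order property.

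For the construction, for each pair $i \le j < \omega$ take a copy $R_{ij}$ of the free presheaf $\cc(X,-)$ on a single generator $c_{ij}$ of sort $X$ --- so the elements of $R_{ij}$ are exactly the $\phi \cdot c_{ij}$ for $\phi$ an arrow out of $X$. Let $K$ be the quotient of $\coprod_{i \le j} R_{ij}$ by the congruence generated by the identifications $f \cdot c_{ij} = f \cdot c_{ij'}$ (for all $j, j' \ge i$) and $g \cdot c_{ij} = g \cdot c_{i'j}$ (for all $i, i' \le j$). Write $a_i \in K$ for the common value of the $f \cdot c_{ij}$ with $j \ge i$, and $b_j \in K$ for the common value of the $g \cdot c_{ij}$ with $i \le j$. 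For $i \le j$ the element $[c_{ij}]$ witnesses $K \models \varphi_{f,g}(a_i, b_j)$, which is the easy half of the order property.

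The heart of the argument is that $K \not\models \varphi_{f,g}(a_i, b_j)$ whenever $i > j$, which I would establish by an explicit analysis of the congruence on $\coprod_{i \le j} R_{ij}$. The key observations are that the generating identifications never change the ``underlying arrow'' $\psi : X \to W$ of an element $\psi \cdot c_{kl}$ of sort $W$ --- they only alter the index pair $(k,l)$ --- and that, with $\psi$ held fixed, the first index of the pair is mutable only when $\psi$ factors through $g$ and the second index only when $\psi$ factors through $f$. Since, by the choice of the span, $f$ does not factor through $g$ and $g$ does not factor through $f$, the congruence class of $a_i$ has first index immutably equal to $i$ and the class of $b_j$ has second index immutably equal to $j$. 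Consequently, if some $c = [\phi \cdot c_{kl}]$ of sort $X$ satisfied $f \cdot c = a_i$ and $g \cdot c = b_j$, then comparing underlying arrows gives $f\phi = f$ and $g\phi = g$, so that $f \cdot c = [f \cdot c_{kl}]$ and $g \cdot c = [g \cdot c_{kl}]$; comparing the relevant (immutable) indices then forces $k = i$ and $l = j$, and since $k \le l$ by construction we get $i \le j$, a contradiction. Hence $(a_i)_{i<\omega}$ and $(b_j)_{j<\omega}$ witness the $(f,g)$-order property in $K$, so $K$ has the span-induced order property.

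I expect the main obstacle to be the explicit analysis of the congruence in the previous paragraph: one must check that the congruence is no coarser than the naive picture suggests --- that the underlying arrow of each element is always preserved, and that the factorization status of that arrow (through $f$, through $g$) governs exactly which index changes are forced. This is precisely where the two non-factorization hypotheses on the span enter, and it carries the bulk of the elementary but careful bookkeeping.
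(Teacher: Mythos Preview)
Your proposal is correct and, at the level of the object constructed, coincides with the paper's: both build the witnessing presheaf by amalgamating a triangular array of copies of the representable $\cc(X,-)$, identifying the $f$-images along one axis and the $g$-images along the other. The paper packages this as an iterated chain of pushouts (its Construction~3.7) and verifies the order property by an induction along that chain, tracking the auxiliary set $H=\langle a\rangle\cap\langle b\rangle$ through several lemmas; you instead present the same object as a single quotient of a coproduct and analyze the generated congruence directly.

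Your organizing principle---that the congruence preserves the ``underlying arrow'' $\psi$ of each element $\psi\cdot c_{kl}$, and that mutability of the two indices is governed precisely by whether $\psi$ factors through $g$ or through $f$---is a clean and correct way to carry out the verification, and it makes the role of the two non-factorization hypotheses completely transparent. This is arguably more elementary than the paper's inductive bookkeeping, at the cost of being specific to the representable case (the paper's Construction~3.7 is stated for an arbitrary $K$ with suitable $a,b,c$, though only the representable instance is used). The one point you flag as the main obstacle---checking the congruence is no coarser than expected---is indeed straightforward once one observes that closing the generating relation under the unary operations preserves the underlying arrow, so the equivalence closure does too.
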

To prove the above theorem we generalise the construction in \cite[Section 3]{Mu} from sets with a monoid action to presheaves (see Construction \ref{order-property-construction}). We first explain how this yields the above theorem, for which we recall the notion of a representable presheaf (see Definition \ref{representable-presheaf}). We then dedicate the remainder of this section to working out the details of Construction \ref{order-property-construction}.
\begin{defi}
\label{representable-presheaf}
Recall that the \emph{representable presheaf} $\cc(X, -)$ on a fixed object $X \in \cc$ is defined as follows. The elements of an arbitrary sort $Y$ are the arrows $X \to Y$ in $\cc$, that is, the set $\cc(X, Y)$. For an arrow $f: Y \to Z$ we define its action by composition. That is, we need to define a function $\cc(X, Y) \to \cc(X, Z)$. So given $a \in \cc(X, Y)$, which is an arrow $a: X \to Y$, we define $f \cdot a$ to be the composition $f \circ a: X \to Z$.
\end{defi}
If $\cc$ is a monoid $S$ (see Example \ref{monoid-as-category}) then there is only one sort. Working out the above definition for that one sort, we get $S$ itself with the obvious $S$-action.
\begin{proof}[Proof of Theorem \ref{no-order-property-implies-llp}]
We prove the contrapositive. So suppose that $\cc$ is not locally linearly preordered. Then there is a span $Y \xleftarrow{f} X \xrightarrow{g} Z$ such that there is no $h: Y \to Z$ with $hf = g$ and there is also no $h': Z \to Y$ such that $f = h'g$. Let $K = \cc(X, -)$ be the representable presheaf on $X$ (see Definition \ref{representable-presheaf}). Then Construction \ref{order-property-construction} with $a = f$, $b = g$ and $c = Id_X$ yields a presheaf $K_\infty$ together with sequences $(a_n)_{n < \omega}$ and $(b_m)_{m < \omega}$ of elements in $K_\infty$, such that the following hold.
\begin{enumerate}
\item Construction \ref{order-property-construction} includes elements $c_{n,m} \in K_\infty$ for all $m \leq n < \omega$, such that $f \cdot c_{n,m} = a_n$ and $g \cdot c_{n,m} = b_m$,
\item For all $n < m < \omega$ there is no $d \in K_\infty$ with $a_n \in \gen{d}$ and $b_m \in \gen{d}$, by Lemma \ref{order-property-construction-no-connections}. In particular, there is no $d \in K_\infty$ such that $f \cdot d = a_n$ and $g \cdot d = b_m$.
\end{enumerate}
Together, (i) and (ii) above say exactly that $K_\infty$ has the span-induced order property.
\end{proof}
\begin{constr}
\label{order-property-construction}
Let $K$ be a presheaf and let $a, b, c \in K$ be such that:
\begin{itemize}
\item there is $f \in \cc$ such that $a = f \cdot c$,
\item there is $g \in \cc$ such that $b = g \cdot c$,
\item there is no $h \in \cc$ such that $h \cdot a = b$,
\item there is no $h' \in \cc$ such that $a = h' \cdot b$.
\end{itemize}
Let $I = \{(n, m) : m \leq n < \omega\}$ and order $I$ lexicographically. Note that $I$ has the same order type as $\omega$. We construct a chain $(K_i)_{i \in I}$ of presheaves by induction on $i$, together with a sequence of elements $(c_i)_{i \in I}$, as well as the following data.
\begin{itemize}
\item The first place in the chain where $c_i$ appears is $K_i$. That is, $c_i \in K_i$, and $c_i \not \in K_j$ for all $j < i$.
\item There are elements $(a_n)_{n < \omega}$ such that $a_n = f \cdot c_{n, m}$ for all $m \leq n < \omega$. The first place in the chain where $a_n$ appears is $K_{n,0}$.
\item There are elements $(b_m)_{m < \omega}$ such that $b_m = g \cdot c_{n, m}$ for all $m \leq n < \omega$. The first place in the chain where $b_m$ appears is $K_{m,m}$.
\item For all $m \leq n < \omega$ there is a monomorphism $v_{n,m}: K \to K_{n,m}$ such that $c_{n,m} = v_{n,m}(c)$.
\end{itemize}
Having constructed the chain, we set $K_\infty = \bigcup_{i \in I} K_i$. We construct $K_{n, m}$ based on the following cases.\\
\\
\underline{The base case, $n = m = 0$.} We start the inductive construction by setting $K_{0,0} = K$ and $c_{0,0} = c$. Then $a_0 = a$ and $b_0 = b$.\\
\\
\underline{The case $n > m = 0$.} Let $u_{n,0}: \gen{b} \to K_{n-1,n-1}$ be the map given by $b \mapsto b_0$. We construct the link $K_{n-1,n-1} \to K_{n, 0}$ as the following pushout.
\[
\xymatrix@=1.8pc{
K_{n-1,n-1} \ar[r] & K_{n,0} \pushout \\
\gen{b} \ar@{^{(}->}[r] \ar[u]^{u_{n,0}} & K \ar[u]_{v_{n,0}}
}
\]
We set $c_{n,0} = v_{n,0}(c)$, and so $a_n = v_{n,0}(a)$.\\
\\
\underline{The case $n > m > 0$.} Let $u_{n,m}: \gen{a, b} \to K_{n,m-1}$ be the map given by $a \mapsto a_n$ and $b \mapsto b_m$. We construct the link $K_{n,m-1} \to K_{n, m}$ as the following pushout.
\[
\xymatrix@=1.8pc{
K_{n,m-1} \ar[r] & K_{n,m} \pushout \\
\gen{a, b} \ar@{^{(}->}[r] \ar[u]^{u_{n,m}} & K \ar[u]_{v_{n,m}}
}
\]
We set $c_{n,m} = v_{n,m}(c)$.\\
\\
\underline{The case $n = m > 0$.} Let $u_{n,m}: \gen{a} \to K_{n,m-1}$ be the map given by $a \mapsto a_n$. We construct the link $K_{n,m-1} \to K_{n, m}$ as the following pushout.
\[
\xymatrix@=1.8pc{
K_{n,m-1} \ar[r] & K_{n,m} \pushout \\
\gen{a} \ar@{^{(}->}[r] \ar[u]^{u_{n,m}} & K \ar[u]_{v_{n,m}}
}
\]
We set $c_{n,m} = v_{n,m}(c)$, and so $b_m = v_{n,m}(b)$.
\end{constr}
\begin{lemma}[Well-definedness of Construction \ref{order-property-construction}]
\label{order-property-construction-well-defined}
In Construction \ref{order-property-construction}, set $H = \gen{a} \cap \gen{b} = \gen{a_0} \cap \gen{b_0} \subseteq K = K_{0,0}$. We prove that the following properties hold at stage $(n, m) \neq (0, 0)$ by induction on the construction, which in particular shows that each step in the construction is well-defined.
\begin{enumerate}[label=(\roman*)]
\item The map $u_{n,m}$ is a well-defined monomorphism. In particular, all the maps in the construction are monomorphisms.
\item For all $n',m' < \omega$ such that $a_{n'}$ and $b_{m'}$ exist in $K_{n,m}$ we have $\gen{a_{n'}} \cap \gen{b_{m'}} = H$.
\item For all $0 \leq n' < n$ the map $\alpha_{n',n}: \gen{a_{n'}} \to \gen{a_n}, h \cdot a_{n'} \mapsto h \cdot a_n$ is injective and well-defined. Furthermore, we have $h \cdot a_n \in H$ if and only if $h \cdot a_n = h \cdot a_0$.
\item For all $0 \leq m' < m$ the map $\beta_{m',m}: \gen{b_{m'}} \to \gen{b_m}, h \cdot b_{m'} \mapsto h \cdot b_m$ is injective and well-defined. Furthermore, we have $h \cdot b_m \in H$ if and only if $h \cdot b_m = h \cdot b_0$.
\end{enumerate}
\end{lemma}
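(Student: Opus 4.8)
The plan is to induct on the stage $(n,m)\in I$ in the lexicographic order (which has order type $\omega$), proving (i)--(iv) simultaneously; the base ``stage'' is $K_{0,0}=K$, where there is nothing to prove. The crucial structural input is the explicit description of pushouts of monomorphisms in $\Set^\cc$ from Section~\ref{subsec:presheaves}: such a pushout is the disjoint union of the two presheaves glued along the common subpresheaf. So once a gluing map $u_{n,m}$ is known to be a monomorphism, $K_{n,m}$ is obtained from $K_{n,m-1}$ (or $K_{n-1,n-1}$) by attaching a fresh copy $v_{n,m}(K)$ of $K$ that meets the old part exactly along the image of $\gen a$, $\gen b$, or $\gen{a,b}$. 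In particular the coprojection $v_{n,m}$ is injective, so $\gen{a_n}=v_{n,m}(\gen a)$ and $\gen{b_m}=v_{n,m}(\gen b)$ are canonically isomorphic to $\gen a$ and $\gen b$; this is what makes the translation maps $\alpha_{n',n}$ and $\beta_{m',m}$ in (iii) and (iv) well-defined and injective, essentially for free from (i) at the earlier stages. I would note at the outset that the hypotheses on $a,b,c$ force $c\notin\gen{a,b}$ (if $c=h\cdot a$ then $b=g\cdot c=(gh)\cdot a$, and if $c=h\cdot b$ then $a=f\cdot c=(fh)\cdot b$, each contradicting a bullet in Construction~\ref{order-property-construction}), so that each $c_{n,m}=v_{n,m}(c)$ really is a new element.

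At a stage $(n,m)$ I would first prove (i). For $n>m=0$ the map $u_{n,0}\colon\gen b\to K_{n-1,n-1}$, $b\mapsto b_0$, is just the restriction of the chain inclusion $K_{0,0}\hookrightarrow K_{n-1,n-1}$ (the element $b_0=b$ never moves), hence a monomorphism since all earlier $u$'s, and therefore all composites of pushout coprojections, are monomorphisms by the inductive hypothesis. For $n=m>0$ the map $u_{n,n}\colon\gen a\to K_{n,n-1}$ is the translation $\alpha_{0,n}$, a monomorphism by (iii) at the earlier stage. For $0<m<n$, the map $u_{n,m}\colon\gen{a,b}\to K_{n,m-1}$ restricts to $\alpha_{0,n}$ on $\gen a$ and to $\beta_{0,m}$ on $\gen b$, and these agree on $H=\gen a\cap\gen b$ because the ``furthermore'' clauses of (iii) and (iv) amount to saying that $\alpha_{0,n}$ and $\beta_{0,m}$ fix $H$ pointwise; hence $u_{n,m}$ is a well-defined homomorphism, and it is injective because $\gen{a_n}\cap\gen{b_m}=H$ by (ii) at the earlier stage forces any coincidence $\alpha_{0,n}(\xi)=\beta_{0,m}(\eta)$ to lie in $H$ and therefore to have $\xi=\eta$.

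With (i) available at stage $(n,m)$, the remaining clauses (ii)--(iv) are inherited verbatim from the previous stage except at the two ``introducing'' stages, because attaching the fresh copy along a glued-in subpresheaf enlarges no existing $\gen{a_{n'}}$ or $\gen{b_{m'}}$ and creates no new element common to two of them (later coprojections being injective). At stage $(n,0)$, where $a_n$ is born, I would use that $v_{n,0}(K)$ meets $K_{0,0}$ exactly in $\gen b$ to see that $h\cdot a_n=v_{n,0}(h\cdot a)$ lies in $K_{0,0}$ precisely when $h\cdot a\in\gen b$, i.e.\ when $h\cdot a\in H$, and that in that case $v_{n,0}$ acts identically --- this yields the ``furthermore'' clause of (iii) and the new instances $\gen{a_n}\cap\gen{b_{m'}}=H$ of (ii). Stage $(n,n)$, where $b_n$ is born, is symmetric, but the fresh copy is now glued along $\gen{a_n}$ rather than along a subpresheaf of $K_{0,0}$; so one first invokes the clause of (iii) already proved at stage $(n,0)$ to identify $\gen{a_n}\cap K_{0,0}$ with $H$, and then argues as before that $v_{n,n}$ carries the $H$-part of the new copy back onto $H\subseteq K_{0,0}$, giving the ``furthermore'' clause of (iv) and the instances $\gen{a_{n'}}\cap\gen{b_n}=H$ of (ii).

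I expect the main obstacle to be organisational rather than a single deep step: the inductive statement must be arranged so that (i) at $(n,m)$ follows from (ii)--(iv) at strictly earlier stages while (ii)--(iv) at $(n,m)$ follow from (i) at $(n,m)$ plus the earlier data, and one must track in which $K_{n,m}$ each equation is being asserted (harmless, since later coprojections are monomorphisms, but it needs saying). The one place that needs a genuine argument rather than unwinding of definitions is the claim $v_{n,m}(H)=H$ --- that the $H$-part of a freshly attached copy of $K$ is literally identified with the original $H\subseteq K_{0,0}$, not merely with an isomorphic copy --- and propagating exactly this through the induction is the purpose of the ``furthermore'' clauses of (iii) and (iv).
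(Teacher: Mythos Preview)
Your proposal is correct and follows essentially the same inductive scheme as the paper: case-split on whether $m=0$, $0<m<n$, or $m=n$ for (i), then verify (ii)--(iv) only at the stages where a new $a_n$ or $b_m$ is introduced, using throughout that pushouts of monomorphisms in $\Set^\cc$ are disjoint unions glued along the common part (hence also pullbacks). Your reformulation of the ``furthermore'' clauses as ``$\alpha_{0,n}$ and $\beta_{0,m}$ fix $H$ pointwise'' is a cleaner way to package what is being used---note that deducing this from the literal iff in (iii) requires the inclusion $H\subseteq\gen{a_n}$ coming from (ii), so you should make that dependence explicit when writing it out---and your side remark that $c\notin\gen{a,b}$ is a useful sanity check the paper leaves implicit.
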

\begin{proof}
We prove each item separately.
\begin{enumerate}[label=(\roman*)]
\item We consider each of the cases in the construction. The case $n > m = 0$ is immediate, as $u_{n,0}$ is a genuine inclusion. The case $n = m > 0$ follows from (iii), as then $u_{n,m} = \alpha_{0,n}$. The case where $n > m > 0$ remains. In this case $a_n$ and $b_m$ already existed at an earlier stage (to be precise, $a_n$ was the last one to be constructed, at stage $(n,0)$). So, by the induction hypothesis, we can apply (ii)--(iv) to these elements. As $u_{n,m} = \alpha_{0,n} \cup \beta_{0,m}$ and $\alpha_{0,n}$ and $\beta_{0,m}$ are injective by (iii) and (iv), we only need to show that $h \cdot a_0 = h' \cdot b_0$ if and only if $h \cdot a_n = h' \cdot b_m$, for all $h,h' \in \cc$.

Suppose that $h \cdot a_0 = h' \cdot b_0$, then this element is in $H$ by definition. So by (iii) and (iv) we have $h \cdot a_n = h \cdot a_0 = h' \cdot b_0 = h' \cdot b_m$, as required.

Conversely, suppose that $h \cdot a_n = h' \cdot b_m$. Then by (ii) this element is in $H$, and so we can apply (iii) and (iv) again to conclude that $h \cdot a_0 = h \cdot a_n = h' \cdot b_m = h' \cdot b_0$.

We have thus shown that $u_{n,m}$ is a well-defined monomorphism for all $m \leq n < \omega$. The conclusion that all constructed maps are monomorphisms follows, because they are all constructed as pushouts of monomorphisms.

\item By (i) all the maps are monomorphisms, so $\gen{a_{n'}} \cap \gen{b_{m'}}$ is computed the same in $K_{n',m'}$ as in $K_{n,m}$. We can thus limit ourselves to the stages where the last one of $a_{n'}$ and $b_{m'}$ is constructed. We will treat the case where $a_{n'}$ is constructed last (i.e., $m' < n'$), so this is in the construction of $K_{n',0}$. The other case ($m' \geq n'$) is similar by considering the construction of $K_{m',m'}$.

As pushouts of monomorphisms are pullbacks and $\gen{a_{n'}} = \gen{v_{n',0}(a)} \subseteq v_{n',0}(K)$ and $\gen{b_{m'}} \subseteq K_{m',m'} \subseteq K_{n'-1,n'-1}$, we have $\gen{a_{n'}} \cap \gen{b_{m'}} \subseteq \gen{b} = v_{n',0}(\gen{b})$. Hence
\begin{align*}
\gen{a_{n'}} \cap \gen{b_{m'}} &= \gen{a_{n'}} \cap \gen{b_{m'}} \cap v_{n',0}(\gen{b}) \\
&= v_{n',0}(\gen{a} \cap \gen{b}) \cap \gen{b_{m'}} \\
&= v_{n',0}(H) \cap \gen{b_{m'}} \\
&= H \cap \gen{b_{m'}},
\end{align*}
using (i) to tell us that $v_{n',0}$ is a monomorphism, and $v_{n',0}$ restricted to $\gen{b}$ is the inclusion, so $v_{n',0}(H) = H$. By (ii) we have $H = \gen{a_{m'}} \cap \gen{b_{m'}} \subseteq \gen{b_{m'}}$, and the claim follows.

\item There is only something to prove at stage $(n, 0)$, because that is where $a_n$ is constructed. By (i) the map $v_{n,0}$ is a monomorphism, so because $a_n = v_{n,0}(a)$ we have that $\alpha_{0,n}$ is injective and well-defined. For $0 < n' < n$ we have that $\alpha_{n',n} = \alpha_{0,n} \alpha_{0,n'}^{-1}$ is well-defined and injective by applying (iii) from the inductive hypothesis to $n' > 0$.

Now suppose that $h \cdot a_n \in H$. Then there is $h' \in \cc$ such that $h \cdot a_n = h' \cdot b$, and so $v_{n,0}(h \cdot a) = h \cdot v_{n,0}(a) = h \cdot a_n = h' \cdot b = h' \cdot v_{n,0}(b) = v_{n,0}(h' \cdot b)$. By injectivity of $v_{n,0}$, see (i), we thus have $h \cdot a = h' \cdot b$ in $K$. As $a_0 = a$ in $K_{0,0} = K$, we conclude $h \cdot a_n = h' \cdot b = h \cdot a_0$.

Conversely, suppose that $h \cdot a_n = h \cdot a_0$. Then this element is in $K_{n-1,n-1} \cap v_{n,0}(K)$. As pushouts of monomorphisms are pullbacks, we have that $K_{n-1,n-1} \cap v_{n,0}(K) = \gen{b_0}$. So $h \cdot a_n \in \gen{b_0}$, and thus $h \cdot a_n \in \gen{a_0} \cap \gen{b_0} = H$.

\item Analogous to (iii), limiting ourselves to the stages where $n = m$ instead and using that $H = \gen{a_m} \cap \gen{b_0}$ by (ii). \qedhere
\end{enumerate}
\end{proof}
Lemma \ref{order-property-construction-well-defined} is mainly meant to show that Construction \ref{order-property-construction} is well-defined. However, the helper set $H$ in that lemma is useful, so we extract its useful properties.
\begin{coro}
\label{order-property-construction-set-H}
Working in $K_\infty$ from Construction \ref{order-property-construction}, we set $H = \gen{a_0} \cap \gen{b_0}$. Then
\begin{enumerate}[label=(\roman*)]
\item $H = \gen{a_n} \cap \gen{b_m}$ for all $n, m < \omega$,
\item $H = \gen{a_n} \cap \gen{a_{n'}} = \gen{b_n} \cap \gen{b_{n'}}$ for all $n \neq n'$ in $\omega$,
\item $a_n \not \in H$, $b_m \not \in H$ and $c_{n,m} \not \in H$ for all $m \leq n < \omega$.
\end{enumerate}
\end{coro}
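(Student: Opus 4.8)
I would derive all three items directly from Lemma~\ref{order-property-construction-well-defined}. The key preliminary remark is that, by Lemma~\ref{order-property-construction-well-defined}(i), every transition map $K_i \to K_j$ in Construction~\ref{order-property-construction} is a monomorphism; hence for any $S, T \subseteq K_i$ the intersection $\gen{S} \cap \gen{T}$ is the same whether computed in $K_i$, in a later $K_j$, or in $K_\infty$. In particular the set $H = \gen{a_0} \cap \gen{b_0}$ of this corollary is exactly the set $H = \gen{a} \cap \gen{b} \subseteq K_{0,0}$ of Lemma~\ref{order-property-construction-well-defined}, so I would move freely between the stages of the chain and the colimit throughout. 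With this in hand, item (i) is immediate: for any $n, m < \omega$ both $a_n$ and $b_m$ appear at some finite stage, so Lemma~\ref{order-property-construction-well-defined}(ii) gives $\gen{a_n} \cap \gen{b_m} = H$.

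For item (iii) I would start with the base generators. If $a_0 \in H$ then $a_0 \in \gen{b_0}$, so $a = a_0 = h' \cdot b_0 = h' \cdot b$ in $K = K_{0,0}$ for some $h' \in \cc$, contradicting the hypothesis of Construction~\ref{order-property-construction} that no such $h'$ exists; hence $a_0 \notin H$, and symmetrically $b_0 \notin H$ using the hypothesis that no $h \in \cc$ satisfies $h \cdot a = b$. For $n \geq 1$, applying Lemma~\ref{order-property-construction-well-defined}(iii) with $h$ the identity arrow $Id_X$ on the sort of $a_n$ shows $a_n \in H$ if and only if $a_n = a_0$; so $a_n \in H$ would force $a_0 = a_n \in H$, a contradiction. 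The same argument with Lemma~\ref{order-property-construction-well-defined}(iv) gives $b_m \notin H$. Finally, $H$ is a subpresheaf of $K_\infty$ (an intersection of generated subpresheaves), so it is closed under the $\cc$-action; since $f \cdot c_{n,m} = a_n \notin H$, we get $c_{n,m} \notin H$.

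For item (ii), the inclusions $H \subseteq \gen{a_n} \cap \gen{a_{n'}}$ and $H \subseteq \gen{b_n} \cap \gen{b_{n'}}$ are part of item (i). For the reverse inclusions I would work at the stage of Construction~\ref{order-property-construction} at which the later of the two generators is born. Assuming $n' < n$ and looking at stage $K_{n,0}$, where $a_n$ is introduced, we have $\gen{a_n} = v_{n,0}(\gen{a}) \subseteq v_{n,0}(K)$ while $\gen{a_{n'}} \subseteq K_{n-1,n-1}$ (since $n' \leq n-1$). The defining square of this step is a pushout of monomorphisms, hence also a pullback, so $K_{n-1,n-1} \cap v_{n,0}(K)$ is the common copy of $\gen{b}$, i.e.\ $\gen{b_0}$. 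Thus $\gen{a_n} \cap \gen{a_{n'}} \subseteq \gen{b_0}$, and combining with (i) yields $\gen{a_n} \cap \gen{a_{n'}} \subseteq \gen{a_n} \cap \gen{b_0} = H$. Symmetrically, working at stage $K_{n,n}$ and using the pullback relation $K_{n,n-1} \cap v_{n,n}(K) = \gen{a_n}$ gives $\gen{b_n} \cap \gen{b_{n'}} \subseteq \gen{a_n}$ and hence $\gen{b_n} \cap \gen{b_{n'}} \subseteq \gen{a_n} \cap \gen{b_{n'}} = H$.

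I expect the only step needing genuine care to be item (ii): one must correctly identify the stage of the chain at which the two relevant subpresheaves split into the ``old'' and ``new'' halves of a pushout square, and then invoke that such a square is a pullback. The remaining manipulations are routine bookkeeping with the four parts of Lemma~\ref{order-property-construction-well-defined}.
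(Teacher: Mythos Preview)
Your proposal is correct and follows essentially the same route as the paper: item (i) is read off Lemma~\ref{order-property-construction-well-defined}(ii); item (iii) reduces $a_n \in H$ to $a_0 \in H$ via Lemma~\ref{order-property-construction-well-defined}(iii) and then contradicts the hypotheses of Construction~\ref{order-property-construction}, with $c_{n,m} \notin H$ following from closure of $H$ under the action; and item (ii) is obtained by working at the stage where the later generator is born and using that the defining pushout square of monomorphisms is a pullback, exactly as the paper does. The only cosmetic differences are that you treat (iii) before (ii) and spell out the $b$-case of (ii) explicitly, whereas the paper leaves it as ``similar''.
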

\begin{proof}
We prove each item separately.
\begin{enumerate}[label=(\roman*)]
\item This is Lemma \ref{order-property-construction-well-defined}(ii).
\item We prove that $\gen{a_n} \cap \gen{a_{n'}} = H$. Without loss of generality, assume that $n' < n$. Then $\gen{a_{n'}} \subseteq K_{n-1,n-1}$ and $\gen{a_n} \subseteq v_{n,0}(K)$. As the pushout square that is used to construct $K_{n,0}$ is also a pullback square, we have that $\gen{a_{n'}} \cap \gen{a_n} \subseteq \gen{b} = \gen{b_0}$. So, using (i), we have $\gen{a_{n'}} \cap \gen{a_n} = \gen{a_{n'}} \cap \gen{a_n} \cap \gen{b_0} = (\gen{a_{n'}} \cap \gen{b_0}) \cap (\gen{a_n} \cap \gen{b_0}) = H$. The case $\gen{b_m} \cap \gen{b_{m'}} = H$ is similar.
\item We prove that $a_n \not \in H$ (and $b_m \not \in H$ is similar). Then $c_{n,m} \not \in H$ follows, as $f \cdot c_{n,m} = a_n$ and $H$ is by construction closed under the the actions by $\cc$ (i.e., $H$ is a presheaf itself).

Suppose for a contradiction that $a_n \in H$. Then by Lemma \ref{order-property-construction-well-defined}(iii) we have that $a_0 \in H$. So there is $h \in \cc$ such that $a_0 = h \cdot b_0$. This contradicts the assumptions on $a = a_0$ and $b = b_0$ in Construction \ref{order-property-construction}. \qedhere
\end{enumerate}
\end{proof}
\begin{lemma}
\label{order-property-construction-no-connections}
Working in $K_\infty$ from Construction \ref{order-property-construction} we have for all $n < m < \omega$ that there is no $d \in K_\infty$ with $a_n \in \gen{d}$ and $b_m \in \gen{d}$.
\end{lemma}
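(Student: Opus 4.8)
The plan is to keep track, for each element of $K_\infty$, of which of the copies of $K$ pushed in during Construction \ref{order-property-construction} it can lie in, and then to see that $a_n$ and $b_m$ are forced into incompatible copies whenever $n < m$.

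First I would record a bookkeeping observation. Setting $v_{0,0} := \id_K$, every element $d$ of $K_\infty$ lies in the image $v_{n',m'}(K)$ of the copy of $K$ glued in at the stage where $d$ first appears: indeed, the underlying set of the pushout defining $K_{n',m'}$ is $K_{\mathrm{prev}} \sqcup K$ with a subpresheaf identified, so any element genuinely new at that stage comes from the $K$-summand. Since each $v_{n',m'}$ is a homomorphism of presheaves, it follows that $\gen{d} = v_{n',m'}(\gen{x}) \subseteq v_{n',m'}(K)$ whenever $d = v_{n',m'}(x) \in v_{n',m'}(K)$. Hence, if a single $d \in K_\infty$ generates both $a_n$ and $b_m$, then $a_n$ and $b_m$ both lie in one copy $v_{n',m'}(K)$.

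The heart of the matter is then the localisation claim: $a_n \in v_{n',m'}(K)$ forces $n' = n$, and $b_m \in v_{n',m'}(K)$ forces $m' = m$. Granting this, we are done: such a $d$ would yield $n' = n$ and $m' = m$ with $(n',m') \in I$, hence $m = m' \le n' = n$, contradicting $n < m$. To prove the claim (I treat $a_n$; the case of $b_m$ is symmetric) I would work through the stages of the construction. The easy direction, $v_{n,m'}(a) = a_n$ for all $0 \le m' \le n$, is read off directly from the three cases. For the converse, suppose $a_n \in v_{n',m'}(K)$ with $n' \neq n$; since $a_n$ is introduced only at stage $(n,0)$, it already exists at the stage $K_{\mathrm{prev}}$ immediately preceding $(n',m')$, so $a_n \in v_{n',m'}(K) \cap K_{\mathrm{prev}}$. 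As pushouts of monomorphisms in $\Set^\cc$ are pullbacks and all maps in Construction \ref{order-property-construction} are monomorphisms (Lemma \ref{order-property-construction-well-defined}(i)), this intersection is $v_{n',m'}(G)$, where $G \in \{\gen{a}, \gen{b}, \gen{a,b}\}$ is the subpresheaf glued in at stage $(n',m')$. Pulling $a_n$ back through the monomorphism $v_{n',m'}$ and using that $v_{n',m'}$ sends $a$ to $a_{n'}$ and $b$ to some $b_j$ then places $a_n$ in $\gen{a_{n'}}$ or in some $\gen{b_j}$; by Corollary \ref{order-property-construction-set-H}(i),(ii) --- using $n' \neq n$ in the first case --- we conclude $a_n \in H$, contradicting Corollary \ref{order-property-construction-set-H}(iii).

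The main obstacle is simply the case analysis in the localisation claim: for each shape of stage --- the base stage, $(n',0)$ with $n' > 0$, $(n',m')$ with $n' > m' > 0$, and $(n',n')$ with $n' > 0$ --- one must check which of $a_n, b_m$ has already been introduced, identify the glued-in subpresheaf $G$, and compute the action of $v_{n',m'}$ on $a$ and $b$. In every instance the contradiction is delivered by the same mechanism: the element would be forced into $H$.
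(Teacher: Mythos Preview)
Your proof is correct and takes essentially the same approach as the paper: both locate $d$ in the copy $v_{n',m'}(K)$ where it first appears, then force $a_n$ (and, in your version, also $b_m$) into the glued-in subpresheaf and hence into $H$ via Corollary \ref{order-property-construction-set-H}, contradicting part (iii). The only difference is organizational: you package the case analysis as a standalone localisation claim ($a_n \in v_{n',m'}(K) \Rightarrow n' = n$ and $b_m \in v_{n',m'}(K) \Rightarrow m' = m$) and conclude from $m' \le n'$, whereas the paper interleaves the two, using $b_m \in K_{n',m'}$ to force $n < m \le n'$ and then running the case analysis only for $a_n$.
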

\begin{proof}
Suppose for a contradiction that there are such $n < m < \omega$ and $d \in K_\infty$. Let $m' \leq n' < \omega$ be such that $K_{n',m'}$ is the first link in the chain such that $d \in K_{n',m'}$. Let $K'$ be the preceding link in the chain, so $K_{n',m'}$ is constructed as the pushout $K' \xleftarrow{u_{n',m'}} \gen{U} \hookrightarrow K$, where $U \subseteq \{a,b\} \subseteq K$ with at least one of $a$ and $b$ in $U$. Since pushouts of monomorphisms are pullbacks, we have that $C := K' \cap v_{n',m'}(K)$ can be computed as follows
\[
C = \begin{cases}
\gen{b} & \text{if } m' = 0, \\
\gen{a_{n'}, b_{m'}} & \text{if } 0 < m' < n', \\
\gen{a_{n'}} & \text{if } m' = n'.
\end{cases}
\]
As $d \in K_{n',m'} \setminus K'$, we must have $d \in v_{n',m'}(K)$ and so $a_n, b_m \in v_{n',m'}(K)$. At the same time, we have that $b_m \not \in K_{n,0}$ because $m > n$, and so $d \not \in K_{n,0}$. As $K'$ is the last link where $d$ does not appear, we have that $K_{n,0} \subseteq K'$, and so $a_n \in K'$. We thus see that $a_n \in C$. We distinguish three cases, based on the computation of $C$, and show that $a_n \in H$, contradicting Corollary \ref{order-property-construction-set-H}(iii).
\begin{itemize}
\item The case $m' = 0$. Then $a_n \in \gen{b}$, and hence $a_n \in \gen{a_n} \cap \gen{b} = H$ by Corollary \ref{order-property-construction-set-H}(i).
\item The case $0 < m' < n'$. We distinguish two subcases, based on the fact that $C = \gen{a_{n'}, b_{m'}} = \gen{a_{n'}} \cup \gen{b_{m'}}$.
\begin{itemize}
\item The subcase $a_n \in \gen{a_{n'}}$. As $d \in K_{n',m'}$ we also have $b_m \in K_{n', m'}$, and since $b_m$ is constructed at stage $(m,m)$ we have that $(m,m) \leq (n', m')$ in $I$. In particular, $n < m \leq n'$. So by Corollary \ref{order-property-construction-set-H}(ii) we have $a_n \in \gen{a_n} \cap \gen{a_{n'}} = H$.
\item The subcase $a_n \in \gen{b_{m'}}$. Then $a_n \in \gen{a_n} \cap \gen{b_{m'}} = H$ by Corollary \ref{order-property-construction-set-H}(i).
\end{itemize}
\item The case $m' = n'$. Then $a_n \in \gen{a_{n'}}$ and the same argument as in the first subcase above applies.
\end{itemize}
We thus conclude that such $n < m$ and $d \in K_\infty$ cannot exist.
\end{proof}

\section{Characterising pure effective squares}
\label{sec:characterising-pure-effective-squares}
We start with the statement of the main theorem of this section, and its important corollary. The remainder of this section is then devoted to proving the theorem.
\begin{theo}
\label{characterisation-of-pure-effective-squares-if-llp}
If $\cc$ is a locally linearly preordered category then a commuting square in $\Set^\cc$ is pure effective if and only if it is a pullback square consisting of pure monomorphisms.
\end{theo}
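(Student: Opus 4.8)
The plan is to prove the two implications separately; only the ``if'' direction will use that $\cc$ is locally linearly preordered. For ``pure effective $\Rightarrow$ pullback square of pure monomorphisms'', note first that by definition a pure effective square already consists of pure monomorphisms, so only the pullback claim needs proof. Write the square as $C\to A\to D$, $C\to B\to D$ with pushout $P$ and induced map $p\colon P\to D$. Since pure monomorphisms are monomorphisms, $A\leftarrow C\to B$ is a span of monomorphisms, so its pushout square is also a pullback square (as recorded in the preliminaries); viewing everything as inclusions into $P$, this says $C=A\cap B$ computed inside $P$. As $p$ is a pure monomorphism, it is in particular a monomorphism, so $A\cap B$ computed in $D$ agrees with $A\cap B$ computed in $P$, namely $C$; hence the original square is a pullback. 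No hypothesis on $\cc$ is used here.

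For ``pullback square of pure monomorphisms $\Rightarrow$ pure effective'', I would view the square as inclusions into $D$, so that $C=A\cap B$. The pushout $P=A\sqcup_C B$ carries a map to $D$ which is injective (an element of $A$ and an element of $B$ that agree in $D$ lie in $A\cap B=C$ and are thus already identified in $P$) with image the subpresheaf $A\cup B$ of $D$, the union being taken sortwise. A bijective homomorphism of presheaves is an isomorphism, so $P\cong A\cup B$, and the statement reduces to: whenever $A,B\subseteq D$ are subpresheaves with $A\hookrightarrow D$ and $B\hookrightarrow D$ pure, the inclusion $A\cup B\hookrightarrow D$ is pure. This is the step where local linear preorderedness is needed.

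To prove this reduced statement I would take a pp-formula $\varphi(\bar x)=\exists\bar y\,\psi(\bar x,\bar y)$ with $\psi$ a conjunction of equations, parameters $\bar e\in A\cup B$, and a witness $\bar d$ in $D$ for $D\models\varphi(\bar e)$. Since $A\cup B$ is a substructure of $D$, truth of quantifier-free formulas is absolute, so it suffices to find a witness tuple $\bar d'$ with all entries in $A\cup B$. After normalizing $\psi$ so that every conjunct has the form $f\cdot u=v$ with $u,v$ single variables, and splitting the parameters into those valued in $A$ and those valued in $B\setminus A$, the engine of the argument is the following consequence of local linear preorderedness: for any variable $w$, the arrows $f$ occurring in conjuncts $f\cdot w=(\cdots)$ are morphisms out of the single sort of $w$ and are therefore linearly preordered, so the least one factors through all the others, and all but one constraint emanating from each variable becomes redundant (genuinely satisfied, since $\bar d$ already witnesses $\varphi$ in $D$). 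Using this I would assign each existential variable a ``side'', $A$ or $B$, and propagate sides along the now essentially tree-like constraint structure: a variable reachable from an $A$-valued parameter gets side $A$ and automatically has its $D$-value in $A$, since $A$ is a subpresheaf closed under the action, and dually for $B$; whenever propagation would type a value by both sides — a ``valley'' element $p=f\cdot a=g\cdot b$ with $a\in A$, $b\in B$ — one checks $p\in A\cap B=C$, so no conflict arises; and an existential variable lying in a constraint-component with no parameters is handled directly, because the corresponding pp-sentence holds in $D$ and hence in $A$ by purity of $A\hookrightarrow D$. Collecting the side-$A$ constraints into a single pp-formula with parameters in $A$ (true in $D$, hence solvable in $A$ by purity), the side-$B$ constraints similarly in $B$, and observing that the shared side-$C$ values lie in $C$, I would patch the results into a tuple $\bar d'$ in $A\cup B$ witnessing $\varphi(\bar e)$.

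The hard part will be this last step: organizing a correct simultaneous substitution. One must pin down exactly what ``side'' a variable receives when its constraints pull in different directions, verify that valleys genuinely land in $C$, deal with chains of existential variables and with constraint-components detached from all parameters, and — most delicately — check that the patched tuple $\bar d'$ satisfies \emph{every} conjunct of $\psi$, not merely the ones used to determine its entries. The single clean fact that arrows out of a fixed sort form a linear preorder, so that each variable has an essentially unique strongest constraint, is what forces the constraint structure into a shape admitting such a global substitution; converting that structural picture into bookkeeping-correct detail is where the real work lies.
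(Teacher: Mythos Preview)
Your forward direction (pure effective $\Rightarrow$ pullback of pure monos) is correct and is essentially the paper's one-line argument.

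For the converse, your overall strategy---identify $P$ with $A\cup B$, partition the system of equations into an $A$-side and a $B$-side, solve each side by purity, then patch---is exactly the paper's strategy (its Lemma~\ref{no-connection-outside-base-implies-pure-effective}). What differs is the mechanism you propose for producing the partition, and this is where your sketch has a genuine gap. Your ``tree-like'' reduction is not well-defined: when you replace a constraint $f\cdot w=v$ (with $f=hf_0$) by $h\cdot v_0=v$, you have reduced the out-degree of $w$ but increased the out-degree of $v_0$, so a single pass does not leave every node with out-degree $\leq 1$, and there is no evident termination argument for iterating. More seriously, your side-assignment rule is pointed the wrong way. You write that a variable reachable \emph{from} an $A$-parameter has its $D$-value in $A$; that is true for edges $e\to y$ (constraints $f\cdot e=y$), but the constraints that actually carry the side information are $f\cdot y=e$ (edges $y\to e$), and for those the $D$-value of $y$ need not lie in $A$ at all. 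The partition you want is by \emph{undirected} connectedness in the constraint graph, not by directed reachability and not by where the $D$-values happen to sit.

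The paper sidesteps both issues by inserting a purely semantic lemma between the LLP hypothesis and the partition argument. First (Lemma~\ref{llp-implies-small-connecting-paths}), LLP forces any path in $D\setminus C$ between two elements to collapse to length $\leq 2$, with the middle element in the generated subpresheaf of each endpoint; then (Lemma~\ref{llp-implies-no-connection-outside-base}) applying this to the pullback square shows directly that no element of $A\setminus C$ is connected outside $C$ to any element of $B\setminus C$. With that single fact in hand, the partition in Lemma~\ref{no-connection-outside-base-implies-pure-effective} is defined purely from the undirected constraint graph (put $x_i$ on side $A$ iff it lies in the same component as some parameter in $A\setminus C$, after deleting edges whose shared value already lands in $C$), and the verification that the two sides do not collide is immediate---no iterative rewriting of the formula, and no appeal to the $D$-values of the existential variables. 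Your ``valley'' observation is exactly the length-$1$ case of the paper's path-shortening; the missing insight is that LLP lets one shorten \emph{arbitrary} semantic paths, which is what makes the partition globally consistent without any syntactic reduction.
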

\begin{coro}
\label{pure-effective-squares-are-stable-if-llp}
Suppose that $\cc$ is a locally linearly preordered category. Then the pure effective squares form a stable independence relation on $\Set^\cc_\pure$.
\end{coro}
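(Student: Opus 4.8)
The plan is to deduce this from Theorem~\ref{characterisation-of-pure-effective-squares-if-llp} together with the general theory of Section~\ref{sec:stable-independence}. By Fact~\ref{pure-effective-weakly-stable} the pure effective squares already form a \emph{weakly} stable independence relation $\ind$ on $\Set^\cc_\pure$, since $\Set^\cc$ is locally finitely presentable. So the only thing left to prove is that $\ind$ is accessible, i.e.\ that $\Set^\cc_{\pure,\ind}$ --- the category whose objects are pure monomorphisms and whose morphisms are the pure effective squares --- is an accessible category. (Equivalently, by Fact~\ref{cofibrant-generation-iff-stable}, one could instead prove that the pure monomorphisms in $\Set^\cc$ are cofibrantly generated; these routes are interchangeable and I would take whichever is cleaner, but the accessibility one is the most direct from the definition of a stable independence relation.) The crucial simplification is that, under the standing hypothesis that $\cc$ is locally linearly preordered, Theorem~\ref{characterisation-of-pure-effective-squares-if-llp} identifies the morphisms of $\Set^\cc_{\pure,\ind}$ with the pullback squares consisting of pure monomorphisms, which is a far more tractable class than ``pure effective square''.

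With that identification in hand, I would verify two properties of $\Set^\cc_{\pure,\ind}$, viewed inside $(\Set^\cc)^2$ as in the proof of Theorem~\ref{stability-implies-no-complete-bipartite-graph}. First, closure under directed colimits: directed colimits in $\Set^\cc$ and in $(\Set^\cc)^2$ are computed sortwise (and componentwise) as unions, pure monomorphisms are stable under directed colimits by Fact~\ref{pure-mono-facts}(iv), and a directed colimit of pullback squares is again a pullback square because filtered colimits commute with finite limits in a locally finitely presentable category. Second, a ``small witnesses'' property: there is a regular cardinal $\lambda$ above the number of objects and arrows of $\cc$ with $\Set^\cc_\pure$ being $\lambda$-accessible (exactly the $\lambda$ from the proof of Theorem~\ref{stability-implies-no-complete-bipartite-graph}) such that every object of $\Set^\cc_{\pure,\ind}$ is a $\lambda$-directed colimit of $\lambda$-presentable ones --- immediate from $\lambda$-accessibility of $\Set^\cc_\pure$ --- and, more to the point, every morphism of $\Set^\cc_{\pure,\ind}$, i.e.\ every pullback square of pure monomorphisms, is a $\lambda$-directed colimit of such squares with all four vertices $\lambda$-presentable. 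Granting these, accessibility of $\Set^\cc_{\pure,\ind}$ follows from standard accessible-category theory (the same circle of results invoked for the accessibility of $\Set^\cc_\pure$ itself) together with the description of directed colimits and presentable objects in $(\Set^\cc)^2$ recalled there.

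I expect the small-witnesses step for \emph{morphisms} to be the main obstacle, and within it the simultaneous control of purity and the pullback condition: the pullback of a pure monomorphism along a pure monomorphism need not be pure, so naively intersecting a given pullback square with an arbitrary $\lambda$-presentable pure subobject of the top-right vertex produces a pullback square but not one with pure edges. Given a pullback square presented as $K = A \cap B$ inside $D$ with $A \hookrightarrow D$ and $B \hookrightarrow D$ pure, I would fix any $\lambda$-small portion and close it off by a standard alternation of two operations --- enlarging the current approximation $D'$ of $D$ to restore purity of $D' \hookrightarrow D$ (using $\lambda$-accessibility of $\Set^\cc_\pure$) and of the induced inclusions $A \cap D' \hookrightarrow A$, $B \cap D' \hookrightarrow B$, and enlarging so that $K \cap D' = (A \cap D') \cap (B \cap D')$ and all four inclusions among $K \cap D', A \cap D', B \cap D', D'$ are pure --- iterating the alternation (its closure ordinal being a cardinal below $\lambda$, so staying $\lambda$-presentable since $\lambda$ is regular). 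The resulting $\lambda$-presentable pullback squares of pure monomorphisms are then $\lambda$-directed and have the original square as colimit. Everything else is bookkeeping with the explicit descriptions of pullbacks and pushouts of presheaves from Section~\ref{subsec:presheaves}.
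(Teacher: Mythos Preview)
Your strategy shares the paper's opening: weak stability from Fact~\ref{pure-effective-weakly-stable}, then Theorem~\ref{characterisation-of-pure-effective-squares-if-llp} to trade ``pure effective'' for ``pullback of pure monomorphisms'', leaving only accessibility of $\Set^\cc_{\pure,\ind}$. The divergence is in how accessibility is obtained. The paper does not build the directed systems by hand: it exhibits $\Set^\cc_{\pure,\ind}$ as the pullback of $(\Set^\cc_\pure)^2 \to (\Set^\cc_{\mono})^2 \leftarrow \Set^\cc_{\mono,\pbtxt}$, where $\Set^\cc_{\mono,\pbtxt}$ is the category of monomorphisms with pullback squares as arrows. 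The legs are isofibrations, so this pullback is a pseudopullback; $\Set^\cc_{\mono,\pbtxt}$ is accessible by \cite[Example~5.3]{LRV}, the other corner $(\Set^\cc_\pure)^2$ is accessible since $\Set^\cc_\pure$ is, and accessible categories are closed under pseudopullbacks (\cite[Theorem~5.1.6]{MP} or \cite[Exercise~2.n]{AR}). All the bookkeeping is thus delegated to off-the-shelf results.

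Your hands-on L\"owenheim--Skolem alternation is a valid alternative and more self-contained, but there is a slip in your outline. Writing an \emph{object} $f\colon M\to N$ of $\Set^\cc_{\pure,\ind}$ as a $\lambda$-directed colimit of $\lambda$-presentable ones is \emph{not} ``immediate from $\lambda$-accessibility of $\Set^\cc_\pure$'': the transition maps of such a colimit must themselves be morphisms of $\Set^\cc_{\pure,\ind}$, i.e.\ pullback squares of pure monos, and taking $N'\subseteq N$ pure with $M'=M\cap N'$ need not make $M'\to N'$ pure. That is precisely the obstacle your alternation is designed to overcome, so the alternation belongs to the decomposition of \emph{objects}; decomposing \emph{morphisms} is not what accessibility asks for. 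Once this is re-targeted, and once you verify that a pure mono between $\lambda$-presentable presheaves is $\lambda$-presentable in $\Set^\cc_{\pure,\ind}$ (it is: any factorization in $(\Set^\cc)^2$ of a map out of such an $f$ through a stage of a $\lambda$-directed colimit is automatically a pullback square with pure edges, by left-cancellability of pure monos and because intersections are computed the same in any larger ambient monomorphism), your argument goes through.
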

\begin{proof}
Let $\ind$ be the independence relation on $\Set^\cc_\pure$ given by the pure effective squares. By Fact \ref{pure-effective-weakly-stable} $\ind$ is weakly stable, so we only need to show that $\Set^\cc_{\pure,\ind}$ is accessible. We could argue how to directly apply \cite[Theorem 3.4]{KR}, but it is easier to repeat the argument of \cite{KR}.

Write $\Set^\cc_{\mono,\pbtxt}$ for the subcategory of $\left( \Set_{\mono} \right)^2$ whose objects are monomorphisms in $\Set^\cc$ and whose arrows are pullback squares consisting of monomorphisms. By Theorem \ref{characterisation-of-pure-effective-squares-if-llp} a square in $\Set^\cc_\pure$ is $\ind$-independent if and only if it is a pullback square. The following diagram is thus a pullback of categories:
\[
\xymatrix@=1.5pc{
  \Set^\cc_{\pure,\ind} \ar[r] \ar[d] & \Set^\cc_{\mono,\pbtxt} \ar[d]^G \\
  \left( \Set^\cc_\pure \right)^2 \ar[r]_F & \left( \Set^\cc_{\mono} \right)^2
}
\]
The inclusion functors $F$ and $G$ preserve directed colimits, the former because of Fact \ref{pure-mono-facts}(iv) and the latter because pullbacks commute with directed colimits in a locally finitely presentable category \cite[Proposition 1.59]{AR}. Furthermore, $\Set^\cc_{\mono,\pbtxt}$ is accessible \cite[Example 5.3]{LRV}, so $F$ and $G$ are accessible functors.

The subcategories exhibited by $F$ and $G$ are both closed under isomorphisms. So the above pullback is a pseudopullback \cite[Theorem 1]{JS}. As the $2$-category of accessible categories is closed under pseudopullbacks (\cite[Theorem 5.1.6]{MP} or \cite[Exercise 2.n]{AR}), we conclude that $\Set^\cc_{\pure,\ind}$ is accessible.
\end{proof}
The proof strategy for Theorem \ref{characterisation-of-pure-effective-squares-if-llp} will be as follows. We first introduce a technical condition in the definition below (which is due to \cite{Mu}). The point is that it provides a sufficient condition for a square to be pure effective (Lemma \ref{no-connection-outside-base-implies-pure-effective}). If $\cc$ is locally linearly preordered then this condition trivialises (Lemma \ref{llp-implies-no-connection-outside-base}).
\begin{defi}
\label{path-outside-base}
Let $K \subseteq L$ be an inclusion of presheaves, and let $a, b \in L \setminus K$. A \emph{path between $a$ and $b$ outside $K$} is a finite sequence $a = c_0, c_1, \ldots, c_n = b$ such that for all $0 \leq i < n$ we have $c_i \in \gen{c_{i+1}}$ or $c_{i+1} \in \gen{c_i}$. We call $n$ the \emph{length} of the path. If there is such a path we call $a$ and $b$ \emph{connected outside $K$}.

Being connected outside $K$ is an equivalence relation on $L \setminus K$ (possibly relating elements in different sorts). We write $C^L_K(a)$ for the equivalence class of $a$ under this relation. That is:
\[
C^L_K(a) = \{ b \in L \setminus K : \text{$a$ and $b$ are connected outside $K$} \}.
\]
For a subset $A \subseteq L \setminus K$ we write
\[
C^L_K(A) = \bigcup_{a \in A} C^L_K(a).
\]
\end{defi}
\begin{lemma}
\label{no-connection-outside-base-implies-pure-effective}
Any pullback square of pure monomorphisms of presheaves
\[
\xymatrix@=1pc{
  A \ar[r] & L \\
  K \ar[u] \ar[r] & B \ar[u]
}
\]
such that $C^L_K(A \setminus K) \cap C^L_K(B \setminus K) = \emptyset$, is pure effective.
\end{lemma}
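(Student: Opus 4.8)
The plan is to first reduce the assertion to a statement about a single subpresheaf, and then to prove that statement by a ``divide and relocate'' argument on solution sets of positive primitive formulas, in the style of \cite{Mu}. Since the square is a pullback we have $K = A \cap B$, so the explicit description of pushouts in $\Set^\cc$ identifies the pushout $P$ of $A \leftarrow K \to B$ with the subpresheaf $A \cup B$ of $L$: the subset $A \cup B$ is closed under the $\cc$-action, and the canonical map $P \to L$ is injective, because an element of $A \setminus K$ that $L$ identifies with an element of $B$, or of $K$, would lie in $A \cap B = K$. Hence the square being pure effective means exactly that the inclusion $A \cup B \hookrightarrow L$ is a pure monomorphism. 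I also record that $K$, $A$ and $B$ are each pure in $L$ --- for $A$ and $B$ by hypothesis, and for $K$ as the composite $K \hookrightarrow A \hookrightarrow L$ of pure monomorphisms, which is again pure.

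To prove this purity I would fix a pp-formula $\varphi(\bar x) = \exists \bar y\, \psi(\bar x, \bar y)$ with $\psi$ a finite conjunction of equations between unary terms, a tuple $\bar p$ from $A \cup B$, and a tuple $\bar e$ from $L$ with $L \models \psi(\bar p, \bar e)$; the goal is to produce a tuple $\bar e'$ from $A \cup B$ with $(A \cup B) \models \psi(\bar p, \bar e')$, the reverse implication in Definition~\ref{pure-monomorphism} being automatic. Work inside $W := \gen{\bar p, \bar e} \subseteq L$ and call an element of $W$ \emph{free} if it does not lie in $A \cup B$; since $\bar p$ lies in $A \cup B$, every free element has the form $h \cdot e_j$ for some $h$ and $j$. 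Split the set of free elements into its connected components $F$ for the relation ``$u \in \gen{v}$ or $v \in \gen{u}$'', and for each $F$ let its \emph{boundary} $\partial F$ be the set of elements of $W \cap (A \cup B)$ that are adjacent, in this same sense, to some member of $F$.

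The crucial step, and the only place the hypothesis enters, is to show that each boundary $\partial F$ is contained in $A$ or in $B$. Suppose otherwise, say $c \in \partial F$ lies in $A \setminus K$ and $c' \in \partial F$ lies in $B \setminus K$. Pick members of $F$ adjacent to $c$ and to $c'$, join them by a path inside $F$, and prepend $c$ and append $c'$: this yields a path in $L$ all of whose interior vertices lie in $F \subseteq L \setminus (A \cup B) \subseteq L \setminus K$ and whose endpoints avoid $K$, i.e.\ a path outside $K$ in the sense of Definition~\ref{path-outside-base}. Then $c$ and $c'$ are connected outside $K$, so $c' \in C^L_K(A \setminus K) \cap C^L_K(B \setminus K)$, contradicting the assumption. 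Consequently each $F$ can be assigned a side, $A$ or $B$ (if $\partial F \subseteq K$ or $\partial F = \emptyset$, choose $A$); the same argument shows these choices are forced to agree on any boundary element shared by two components unless that element lies in $K$.

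Finally I would relocate the witnesses, one free component at a time. For $F$ on side $A$, say, consider the conjuncts of $\psi$ mentioning some $y_j$ with $e_j \in F$: tracing the term $h \cdot y_j$ through the equation, its value $h \cdot e_j$ lies in $F \cup \partial F$, and one checks that each such conjunct is either an equation between terms in variables whose values lie in $F$, or can be rewritten --- using a boundary element $z \in \partial F \subseteq A$ as a parameter --- in the form $h \cdot y_j = z$. These (rewritten) conjuncts form a pp-formula all of whose parameters lie in $A$ and which $L$ satisfies with the relevant $e_j$ as witnesses, so by purity of $A \hookrightarrow L$ it has a solution inside $A$, furnishing the new values $e_j'$ for those $y_j$; witnesses already in $A \cup B$ are kept. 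One then verifies that $\bar e'$ satisfies every conjunct of $\psi(\bar p, -)$ in $A \cup B$: conjuncts not mentioning a relocated variable hold by absoluteness of atomic formulas in the subpresheaf $A \cup B$, and the rest hold by construction of the local pp-formulas --- here the compatibility of the side assignment is exactly what makes a conjunct that links two distinct free components, necessarily through a common boundary element, come out correct. I expect this last bookkeeping --- which conjuncts to cut, at which boundary element, and the check that everything reassembles --- to be the main technical burden, while the short connectivity argument for ``$\partial F \subseteq A$ or $\partial F \subseteq B$'' is the conceptual core and is what makes the hypothesis $C^L_K(A \setminus K) \cap C^L_K(B \setminus K) = \emptyset$ the natural one.
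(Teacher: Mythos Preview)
Your approach is correct and shares the paper's core strategy: identify the pushout with $A \cup B$, use a connectivity argument to assign witnesses to the $A$-side or the $B$-side (invoking the hypothesis to show no witness is forced to both sides), and then apply purity of $A \hookrightarrow L$ and $B \hookrightarrow L$ to relocate. The paper's execution differs in one organisational respect worth noting: rather than partitioning the free \emph{values} in $L \setminus (A \cup B)$ into many connected components and processing each separately, it partitions the set of \emph{variables} themselves into exactly two blocks $X_A, X_B$ --- cutting the connectivity graph at $K$, not at $A \cup B$ --- and then invokes purity of $A$ and of $B$ exactly once each. This buys simplicity: the only cross-equations are those straddling $X_A$ and $X_B$, their common value is shown to lie in $K$, and that value is recorded as a parameter on both sides, so the reassembly you flag as ``the main technical burden'' collapses to a single short check. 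Your finer decomposition is not wrong, just more elaborate than necessary.
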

\begin{proof}
Let $P$ be the pushout of $A \leftarrow K \to B$. One quickly checks that the induced arrow $P \to L$ is a monomorphism (presheaf categories have effective unions). So we may, and will, assume that all the arrows involved are inclusions. Let $\Sigma(x_1, \ldots, x_n)$ be a finite system of equations with parameters from $P = A \cup B$. Suppose that $c_1, \ldots, c_n \in L$ is a solution to $\Sigma$. We will show that $\Sigma$ has a solution in $P$ as well.

First, we partition $\Sigma$ into the following sets of equations:
\begin{itemize}
\item $\Sigma_1 = \{ (f \cdot x_i = g \cdot x_i) \in \Sigma \}$,
\item $\Sigma_2 = \{ (f \cdot x_i = g \cdot x_j) \in \Sigma : i \neq j \}$,
\item $\Sigma^A_3 = \{ (f \cdot x_i = a) \in \Sigma : a \in A \setminus K \}$,
\item $\Sigma^B_3 = \{ (f \cdot x_i = b) \in \Sigma : b \in B \setminus K \}$,
\item $\Sigma^K_3 = \{ (f \cdot x_i = d) \in \Sigma : d \in K \}$.
\end{itemize}
\begin{claim}
\label{proving-pure-effective-partition-of-variables}
There is a partition $(X_A, X_B)$ of the set of variables $\{x_1, \ldots, x_n\}$ of $\Sigma$ such that:
\begin{enumerate}[label=(\roman*)]
\item all the variables appearing in $\Sigma^A_3$ are contained in $X_A$,
\item all the variables appearing in $\Sigma^B_3$ are contained in $X_B$,
\item whenever $(f \cdot x_i = g \cdot x_j) \in \Sigma_2$ is such that $x_i \in X_A$ and $x_j \in X_B$ then $f \cdot c_i = g \cdot c_j \in K$.
\end{enumerate}
\end{claim}
\begin{proof}
Write $\Sigma' = \Sigma \setminus \{ (f \cdot x_i = g \cdot x_j) \in \Sigma : f \cdot c_i = g \cdot c_j \in K \}$. We define a graph $G$ with vertices $P \cup \{x_1, \ldots, x_n\}$, where two vertices have an edge if they both appear in an equation in $\Sigma'$. Let $G_c$ be the graph obtained from $G$ by replacing the vertex $x_i$ by $c_i$ for every $1 \leq i \leq n$, so the vertices of this new graph are all elements in $L$. We show that an edge between $v,v' \in G_c \setminus K$ represents a path between $v$ and $v'$ outside $K$. Since edges come from equations in $\Sigma'$, and ignoring the case $v = v'$, there are three cases.
\begin{itemize}
\item The edge comes from $\Sigma_2 \cap \Sigma'$, say from the equation $(f \cdot x_i = g \cdot x_j)$, so w.l.o.g.\ $v = c_i$ and $v' = c_j$. Set $d = f \cdot c_i = g \cdot c_j$. By construction of $\Sigma'$ we have $d \not \in K$, and so $c_i, d, c_j$ is a path outside $K$.
\item The edge comes from $\Sigma^A_3$, say from the equation $(f \cdot x_i = a)$, so $v, v'$, which are $c_i$ and $a$, is a path outside $K$.
\item The edge comes from $\Sigma^B_3$, say from the equation $(f \cdot x_i = b)$, so $v, v'$, which are $c_i$ and $b$, is a path outside $K$.
\end{itemize}
As $C^L_K(A \setminus K) \cap C^L_K(B \setminus K) = \emptyset$, there are no $a \in A \setminus K$ and $b \in B \setminus K$ in the same connected component. We can thus define
\[
X_A = \{ x_i : 1 \leq i \leq n \text{ and $x_i$ is connected in $G$ to some } a \in A \setminus K \}
\]
and
\[
X_B = \{x_1, \ldots, x_n\} \setminus X_A.
\]
By construction this is a partition of variables and by the above discussion it satisfies (i) and (ii).

For item (iii) we let  $(f \cdot x_i = g \cdot x_j) \in \Sigma_2$ is such that $x_i \in X_A$ and $x_j \in X_B$, and suppose for a contradiction that $f \cdot c_i = g \cdot c_j \not \in K$. Then $(f \cdot x_i = g \cdot x_j) \in \Sigma'$, and so there would be an edge in $G$ between $x_i$ and $x_j$. So $x_j$ is connected to $x_i$, which is connected to some $a \in A \setminus K$. Hence $x_j \in X_A$, contradicting $x_j \in X_B$. We thus conclude that $f \cdot c_i = g \cdot c_j \in K$.
\end{proof}
Equations in $\Sigma$ that have a variable in both $X_A$ and $X_B$ are all contained in $\Sigma_2$. Enumerate such equations as $\{ f_\ell \cdot x_{i_\ell} = g_\ell \cdot x_{j_\ell} : 1 \leq \ell \leq m \}$. Let $1 \leq \ell \leq m$, by symmetry of equations, we may assume that $x_{i_\ell} \in X_A$ and $x_{j_\ell} \in X_B$. Furthermore, writing $d_\ell = f_\ell \cdot c_{i_\ell} = g_\ell \cdot c_{j_\ell}$, we have that $d_\ell \in K$ by Claim \ref{proving-pure-effective-partition-of-variables}(iii).

Now define two new sets of equations:
\[
\Delta_A = \{ \varphi \in \Sigma : \text{the variables of $\varphi$ are contained in $X_A$} \} \cup \{ f_\ell \cdot x_{i_\ell} = d_\ell : 1 \leq \ell \leq m \}
\]
and
\[
\Delta_B = \{ \varphi \in \Sigma : \text{the variables of $\varphi$ are contained in $X_B$} \} \cup \{ g_\ell \cdot x_{j_\ell} = d_\ell : 1 \leq \ell \leq m \}.
\]
Then $\Delta_A$ is a finite system of equations with parameters from $A$ and with a solution in $L$ (restrict $(c_1, \ldots, c_n)$ to the subtuple corresponding to $X_A$). As $A \to L$ is pure, there is a solution $\bar{a} \in A$ to $\Delta_A$. Similarly, there is a solution $\bar{b} \in B$ to $\Delta_B$. We claim that $\bar{a} \cup \bar{b}$ is a solution to $\Sigma$ in $P$. Let $\varphi \in \Sigma$, we consider three cases.
\begin{itemize}
\item The variables of $\varphi$ are contained in $X_A$, so $\varphi$ will only care about elements in $\bar{a}$. By construction $\varphi \in \Delta_A$, and so $\bar{a}$ satisfies $\varphi$ in $A$, and hence in $P$.
\item The variables of $\varphi$ are contained in $X_B$. This case is analogous to the above.
\item There are two variables in $\varphi$, one of which is in $X_A$ and the other is in $X_B$. Then there is $1 \leq \ell \leq m$ such that $\varphi$ is the equation $f_\ell \cdot x_{i_\ell} = g_\ell \cdot x_{j_\ell}$. Let $a \in \bar{a}$ match $x_{i_\ell}$ and let $b \in \bar{b}$ match $x_{j_\ell}$. By construction of $\Delta_A$ and choice of $\bar{a}$ we have $f_\ell \cdot a = d_\ell$ in $A$ and hence in $P$. Similarly, we have $g_\ell \cdot b = d_\ell$ in $P$. So $f_\ell \cdot a = g_\ell \cdot b$ in $P$, as required.
\end{itemize}
We conclude that $\Sigma$ can be solved in $P$, and so $P \to L$ is pure.
\end{proof}
The converse to Lemma \ref{no-connection-outside-base-implies-pure-effective} does not hold, but we omit the example as it is not relevant for the remainder of this paper.
\begin{lemma}
\label{llp-implies-small-connecting-paths}
Suppose that $\cc$ is a locally linearly preordered category. Let $K \subseteq L$ be an inclusion of presheaves, and let $a, b \in L \setminus K$. Then $a$ and $b$ are connected outside $K$ if and only if there is a path between $a$ and $b$ outside $K$ of length at most $2$.

Furthermore, if $a = c_0, c_1, c_2 = b$ is a path outside $K$ of minimal length then $c_1 \in \gen{a} \cap \gen{b}$.
\end{lemma}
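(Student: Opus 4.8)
The plan is to reduce everything to a single local observation about three consecutive elements. First I would record the (trivial) \emph{transitivity} of the generated-subpresheaf operation: from the formula $\gen{y}=\{f\cdot y : f\in\cc\}$ and composition of arrows in $\cc$, $x\in\gen{y}$ and $y\in\gen{z}$ imply $x\in\gen{z}$; equivalently, $y\in\gen{z}$ implies $\gen{y}\subseteq\gen{z}$. Writing $x\sim x'$ for ``$x\in\gen{x'}$ or $x'\in\gen{x}$'', a path outside $K$ is just a finite $\sim$-chain of elements of $L\setminus K$.

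The key observation I would prove is: \emph{if $c_0\sim c_1\sim c_2$, then either $c_0\sim c_2$, or $c_1\in\gen{c_0}\cap\gen{c_2}$.} This is a case split on the two directions. If $c_1\in\gen{c_0}$ and $c_2\in\gen{c_1}$ (or the mirror image with $0$ and $2$ swapped), transitivity immediately gives $c_0\sim c_2$. If $c_1\in\gen{c_0}$ and $c_1\in\gen{c_2}$, we are already in the second alternative. The remaining ``peak'' case is $c_0\in\gen{c_1}$ and $c_2\in\gen{c_1}$: writing $c_0=f\cdot c_1$ and $c_2=g\cdot c_1$, the arrows $f$ and $g$ form a span out of the sort of $c_1$, and applying local linear preordering to this span yields either $h$ with $hf=g$, giving $c_2=h\cdot c_0\in\gen{c_0}$, or $h'$ with $f=h'g$, giving $c_0=h'\cdot c_2\in\gen{c_2}$; in both subcases $c_0\sim c_2$. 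This ``peak'' case is the only place the hypothesis on $\cc$ enters, and getting the composition conventions right there is the one spot requiring care — everything else is bookkeeping.

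Finally I would take a path $a=c_0,\dots,c_n=b$ outside $K$ of \emph{minimal} length and show $n\le 2$. If $n\ge 3$, then minimality means no interior vertex $c_i$ ($1\le i\le n-1$) can be deleted, i.e.\ $c_{i-1}\not\sim c_{i+1}$ for every such $i$ (deleting $c_i$ would leave a strictly shorter path outside $K$). By the key observation this forces $c_i\in\gen{c_{i-1}}\cap\gen{c_{i+1}}$ for every interior $i$. Instantiating at $i=1$ gives $c_1\in\gen{c_0}$, and at $i=2$ (which exists since $n\ge 3$) gives $c_2\in\gen{c_1}$; transitivity then yields $c_2\in\gen{c_0}$, i.e.\ $c_0\sim c_2$, contradicting the $i=1$ instance above. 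Hence $n\le 2$, which together with the trivial converse gives the first assertion. For the ``furthermore'' statement, if the minimal length is exactly $2$ then $c_0\not\sim c_2$ (otherwise $a,b$ would be a path of length at most $1$), so the key observation applied to $c_0\sim c_1\sim c_2$ forces $c_1\in\gen{c_0}\cap\gen{c_2}=\gen{a}\cap\gen{b}$.
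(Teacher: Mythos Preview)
Your proof is correct and follows essentially the same approach as the paper's: both take a path of minimal length, establish that every interior vertex $c_i$ must lie in $\gen{c_{i-1}}\cap\gen{c_{i+1}}$ (using the locally-linearly-preordered hypothesis exactly in the ``peak'' case $c_{i-1},c_{i+1}\in\gen{c_i}$), and then derive a contradiction from $c_1\in\gen{c_0}$ and $c_2\in\gen{c_1}$ when $n\ge 3$. The only difference is cosmetic: you factor the case analysis into a standalone ``key observation'' about any three $\sim$-related elements, whereas the paper runs the same case split directly inside the minimal-path argument.
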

\begin{proof}
Let $a = c_0, c_1, \ldots, c_n = b$ be a path outside $K$ of minimal length. First, we claim that for every $0 < i < n$ we must have $c_i \in \gen{c_{i-1}}$ and $c_i \in \gen{c_{i+1}}$. Suppose for a contradiction that this is not the case. Without loss of generality, we may assume $c_i \not \in \gen{c_{i-1}}$, and so we must have $c_{i-1} \in \gen{c_i}$. We distinguish two cases.
\begin{itemize}
\item If $c_i \in \gen{c_{i+1}}$ then $c_{i-1} \in \gen{c_{i+1}}$, and so we could have omitted $c_i$ from the path, contradicting minimality.
\item If $c_{i+1} \in \gen{c_i}$ then there are $f,g \in \cc$ such that $c_{i-1} = f \cdot c_i$ and $c_{i+1} = g \cdot c_i$. As $\cc$ is locally linearly preordered, there must be $h \in \cc$ such that $f = hg$, in which case $c_{i-1} = h \cdot c_{i+1}$, or $g = hf$, in which case $c_{i+1} = h \cdot c_{i-1}$. In either case we see that we could have omitted $c_i$ from the path, again contradicting minimality.
\end{itemize}
Now that we have proved the claim, we suppose for a contradiction that $n > 2$. Then the claim applies to $i = 1$ and $i = 2$. The first application tells us that $c_1 \in \gen{c_0}$, while the second application tells us that $c_2 \in \gen{c_1}$, again arriving at a contradiction just like in the first bullet point above. The final sentence follows directly from the claim applied to the case $n = 2$.
\end{proof}
\begin{lemma}
\label{llp-implies-no-connection-outside-base}
Suppose that $\cc$ is a locally linearly preordered category. For any pullback square of monomorphisms of presheaves
\[
\xymatrix@=1pc{
  A \ar[r] & L \\
  K \ar[u] \ar[r] & B \ar[u]
}
\]
we have that $C^L_K(A \setminus K) \cap C^L_K(B \setminus K) = \emptyset$.
\end{lemma}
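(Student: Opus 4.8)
The plan is to argue by contradiction, reducing everything to the standard fact that a pullback square of monomorphisms of presheaves is, after identifying monomorphisms with inclusions, just an intersection: $K = A \cap B$. Thus any element that lies in both $A$ and $B$ must already lie in $K$, and likewise $\gen{a} \subseteq A$ for $a \in A$ and $\gen{b} \subseteq B$ for $b \in B$, since subpresheaves are closed under the $\cc$-action.

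First I would unwind the hypothesis. Suppose $C^L_K(A \setminus K) \cap C^L_K(B \setminus K) \neq \emptyset$, so there is an element of $L \setminus K$ connected outside $K$ to some $a \in A \setminus K$ and also to some $b \in B \setminus K$. Since being connected outside $K$ is an equivalence relation on $L \setminus K$, this gives that $a$ and $b$ are connected outside $K$. Now I would invoke Lemma \ref{llp-implies-small-connecting-paths} to pick a connecting path $a = c_0, c_1, \ldots, c_n = b$ outside $K$ of minimal length, so that $n \leq 2$, and split into the three cases $n = 0, 1, 2$.

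If $n = 0$ then $a = b$, so $a \in (A \setminus K) \cap (B \setminus K) \subseteq A \cap B = K$, a contradiction. If $n = 1$ then, without loss of generality, $b \in \gen{a}$; since $\gen{a} \subseteq A$ we get $b \in A \cap B = K$, contradicting $b \in L \setminus K$ (the symmetric subcase $a \in \gen{b}$ is handled the same way, using $\gen{b} \subseteq B$). If $n = 2$ then the ``furthermore'' clause of Lemma \ref{llp-implies-small-connecting-paths} gives the middle element $c_1 \in \gen{a} \cap \gen{b} \subseteq A \cap B = K$, whereas $c_1 \in L \setminus K$ by definition of a path outside $K$, again a contradiction. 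Hence the intersection is empty.

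I do not expect any real obstacle: the substantive content — that local linear preorder forces connecting paths to have length at most $2$, with the midpoint in $\gen{a} \cap \gen{b}$ — has already been extracted as Lemma \ref{llp-implies-small-connecting-paths}, and what remains is bookkeeping with the identification of pullbacks of monomorphisms with intersections and the closure of subpresheaves under the action. The only point needing a little care is to pass to a path of minimal length before applying the ``furthermore'' clause, and to dispose of the degenerate lengths $0$ and $1$ directly rather than through that clause.
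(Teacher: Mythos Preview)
Your proposal is correct and follows essentially the same route as the paper: reduce to $a \in A \setminus K$ and $b \in B \setminus K$ being connected outside $K$, take a minimal path, invoke Lemma~\ref{llp-implies-small-connecting-paths} to bound its length by $2$, and dispatch each length using $K = A \cap B$ together with $\gen{a} \subseteq A$, $\gen{b} \subseteq B$. The only cosmetic difference is that you treat $n=0$ as a separate case while the paper simply remarks that the pullback condition forces $n>0$.
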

\begin{proof}
Towards a contradiction, suppose that there are $a \in A \setminus K$ and $b \in B \setminus K$ that are connected outside $K$. Let $n$ be minimal so that there is a path between $a$ and $b$ outside $K$ of length $n$. As the square is a pullback, we must have $n > 0$. By Lemma \ref{llp-implies-small-connecting-paths} we also must have $n \leq 2$. So we distinguish two cases.
\begin{itemize}
\item The case $n = 1$. Then either $a \in \gen{b}$ or $b \in \gen{a}$. Without loss of generality, assume the former. Then $a \in A \cap \gen{b} \subseteq A \cap B = K$, contradicting $a \not \in K$.
\item The case where $n = 2$. Let $a = c_0, c_1, c_2 = b$ be a path outside $K$ of minimal length. Then by the last sentence in Lemma \ref{llp-implies-small-connecting-paths} we have that $c_1 \in \gen{a} \cap \gen{b} \subseteq A \cap B = K$, contradicting that the path is outside $K$.
\end{itemize}
\end{proof}
\begin{repeated-theorem}[Theorem \ref{characterisation-of-pure-effective-squares-if-llp}]
If $\cc$ is a locally linearly preordered category then a commutative square in $\Set^\cc$ is pure effective if and only if it is a pullback square consisting of pure monomorphisms.
\end{repeated-theorem}
\begin{proof}
Every pushout square of monomorphisms is a pullback square. So pure effective squares are pullback squares because postcomposing a pullback square with a monomorphism results in a pullback square. The converse follows from Lemma \ref{no-connection-outside-base-implies-pure-effective}, where the Lemma \ref{llp-implies-no-connection-outside-base} takes care of the additional technical condition.
\end{proof}

\section{Classification for cofibrant generation}
\label{sec:classification-for-cofibrant-generation}
We start with the statement of the main theorem of the paper and present a short proof, which follows from the results obtained in the previous sections. Then we present several applications of the main theorem.
\begin{theo}
\label{cofibrantly-generated-classification}
The following are equivalent for a small category $\cc$.
\begin{enumerate}[label=(\roman*)]
\item The pure monomorphisms in $\Set^\cc$ are cofibrantly generated.
\item The pure effective squares form a stable independence relation on $\Set^\cc_\pure$.
\item The pullback squares form a stable independence relation on $\Set^\cc_\pure$.
\item There is a stable independence relation on $\Set^\cc_\pure$.
\item There is no presheaf on $\cc$ that interprets a complete bipartite graph.
\item There is no presheaf on $\cc$ that has the span-induced order property.
\item The category $\cc$ is locally linearly preordered.
\end{enumerate}
\end{theo}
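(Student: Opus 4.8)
The plan is to prove the theorem by closing a cycle of implications that passes through the combinatorial condition (vii), drawing entirely on the machinery assembled in Sections 2--4; the genuine content has already been proved there, so what remains is to assemble the pieces in the right order. First I would record the three purely formal equivalences (i) $\Leftrightarrow$ (ii) $\Leftrightarrow$ (iv). These are immediate from Fact \ref{cofibrant-generation-iff-stable} applied to the locally finitely presentable category $\Set^\cc$: that fact says precisely that $\Set^\cc_\pure$ admits a stable independence relation if and only if the pure effective squares form one if and only if the pure monomorphisms in $\Set^\cc$ are cofibrantly generated.

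Next I would run the main cycle $(\mathrm{ii}) \Rightarrow (\mathrm{v}) \Rightarrow (\mathrm{vi}) \Rightarrow (\mathrm{vii}) \Rightarrow (\mathrm{ii})$. Here $(\mathrm{ii}) \Rightarrow (\mathrm{v})$ is exactly Theorem \ref{stability-implies-no-complete-bipartite-graph}; $(\mathrm{v}) \Rightarrow (\mathrm{vi})$ is the contrapositive of Lemma \ref{order-property-implies-complete-bipartite-graph} (a presheaf with the span-induced order property yields one interpreting a complete bipartite graph); $(\mathrm{vi}) \Rightarrow (\mathrm{vii})$ is Theorem \ref{no-order-property-implies-llp}, which is powered by Construction \ref{order-property-construction}; and $(\mathrm{vii}) \Rightarrow (\mathrm{ii})$ is Corollary \ref{pure-effective-squares-are-stable-if-llp}. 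Together with the previous paragraph this establishes the equivalence of (i), (ii), (iv), (v), (vi) and (vii).

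It remains to fold in (iii). The implication $(\mathrm{iii}) \Rightarrow (\mathrm{iv})$ is trivial, since a stable independence relation on the pullback squares is in particular a stable independence relation. For the converse direction I would use $(\mathrm{vii}) \Rightarrow (\mathrm{iii})$: when $\cc$ is locally linearly preordered, Theorem \ref{characterisation-of-pure-effective-squares-if-llp} identifies the pullback squares of $\Set^\cc_\pure$ with the pure effective squares, so the assertion that they form a stable independence relation is precisely the content of Corollary \ref{pure-effective-squares-are-stable-if-llp}. This closes the circle and completes the proof.

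I do not expect a real obstacle at the level of this theorem: the difficulty has been front-loaded into Theorem \ref{stability-implies-no-complete-bipartite-graph} (stability destroys the bad configuration), into Construction \ref{order-property-construction} and Theorem \ref{no-order-property-implies-llp} (failure of local linear preorder manufactures the order property), and into Corollary \ref{pure-effective-squares-are-stable-if-llp} (local linear preorder produces a stable independence relation via accessibility and the pseudopullback argument). The only points requiring care in the assembly are that ``pullback squares'' in (iii) must be read as pullback squares in $\Set^\cc$ all of whose legs lie in $\Set^\cc_\pure$, so that the ambient category matches, and that Theorem \ref{characterisation-of-pure-effective-squares-if-llp} genuinely gives equality of this class with the pure effective squares rather than a mere inclusion.
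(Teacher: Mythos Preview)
Your proposal is correct and matches the paper's own proof essentially line for line: the same formal equivalences (i) $\Leftrightarrow$ (ii) $\Leftrightarrow$ (iv) via Fact \ref{cofibrant-generation-iff-stable}, the same cycle (ii) $\Rightarrow$ (v) $\Rightarrow$ (vi) $\Rightarrow$ (vii) via Theorem \ref{stability-implies-no-complete-bipartite-graph}, Lemma \ref{order-property-implies-complete-bipartite-graph} and Theorem \ref{no-order-property-implies-llp}, and the same use of Theorem \ref{characterisation-of-pure-effective-squares-if-llp} and Corollary \ref{pure-effective-squares-are-stable-if-llp} to obtain (vii) $\Rightarrow$ (ii) and (vii) $\Rightarrow$ (iii), with (iii) $\Rightarrow$ (iv) trivial. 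The only cosmetic difference is that the paper states (vii) $\Rightarrow$ (ii) and (vii) $\Rightarrow$ (iii) in one breath, whereas you separate them.
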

\begin{proof}
The equivalence of (i), (ii) and (iv) is exactly Fact \ref{cofibrant-generation-iff-stable}. The implication (iii) $\Rightarrow$ (iv) is trivial, while (vii) implies both (ii) and (iii) at the same time by Theorem \ref{characterisation-of-pure-effective-squares-if-llp} and Corollary \ref{pure-effective-squares-are-stable-if-llp}. We can thus close the loop of implications with (ii) $\Rightarrow$ (v) $\Rightarrow$ (vi) $\Rightarrow$ (vii), which are Theorem \ref{stability-implies-no-complete-bipartite-graph}, Lemma \ref{order-property-implies-complete-bipartite-graph} and Theorem \ref{no-order-property-implies-llp}, respectively.
\end{proof}
Recall that an object $L$ is \emph{pure injective} if for any span $K' \xleftarrow{f} K \xrightarrow{g} L$ with $f$ a pure monomorphism, there is an arrow $h: K' \to L$ making the triangle commute: $hf = g$. A category is said to have \emph{enough pure injectives} if every object admits a pure monomorphism into a pure injective.
\begin{repeated-theorem}[Corollary \ref{enough-pure-injectives}]
If $\cc$ is a locally linearly preordered category then $\Set^\cc$ has enough pure injectives.
\end{repeated-theorem}
\begin{proof}
By Theorem \ref{cofibrantly-generated-classification} the pure monomorphisms are cofibrantly generated in $\Set^\cc$. They thus form the left part of a weak factorisation system \cite[Proposition 1.3]{B}. So for any presheaf $K$ the factorisation of $K \to 1$ gives a pure monomorphism into a pure injective.
\end{proof}
\begin{rem}
\label{strengthening-borceux-rosicky}
We recall the statement of \cite[Theorem 2.4]{BRo} (in our terminology): if in a locally finitely presentable category every pullback square of pure monomorphisms is pure effective then it has enough pure injectives. In fact, the proof of that theorem shows that the pure monomorphisms are cofibrantly generated, from which having enough pure injectives follows in the same way as in Corollary \ref{enough-pure-injectives}. The proof strategy of \cite{BRo} reminds us of ours: in Section \ref{sec:characterising-pure-effective-squares} we use a similar condition involving pullback squares to show that the pure effective squares form a stable independence relation, which in turn gives us cofibrant generation through Fact \ref{cofibrant-generation-iff-stable}. The condition of being locally linearly preordered even appears as part of the conditions of \cite[Corollary 2.6]{BRo}, which allows one to apply \cite[Theorem 2.4]{BRo} to certain presheaf categories. Our main improvements are that we removed the second condition from \cite[Corollary 2.6]{BRo}, and the fact that this fully characterises when cofibrant generation happens (i.e., a converse statement).
\end{rem}
\begin{exams}\label{some-examples}
We give some examples of applications of Theorem \ref{cofibrantly-generated-classification}. We start with existing examples, which quickly follow because being locally linearly preordered is an easy condition to verify.
\begin{enumerate}[label=(\arabic*)]
\item Let $\N^+ = (\N; 0, +)$ be the monoid of natural numbers with addition. Then $\N^+$ is locally linearly preordered and so the pure monomorphisms in $\Set^{\N^+}$ are cofibrantly generated. In particular, by Corollary \ref{enough-pure-injectives}, there are enough pure injectives, and so we recover \cite[Example 2.7(2)]{BRo} (which in turn cites \cite{W, T} for this result).
\item Any groupoid (i.e., a small category where every arrow is an isomorphism) is locally linearly preordered. So the pure monomorphisms in the category of presheaves on a groupoid are cofibrantly generated. This also appears as \cite[Corollary 2.8]{BRo} (which in turn generalises \cite[Proposition 3]{Ba}, see also \cite[Corollary 2.9]{BRo}). The statement there only mentions having enough pure injectives (cf.\ Corollary \ref{enough-pure-injectives}), but their proof establishes cofibrant generation (see also Remark \ref{strengthening-borceux-rosicky}).
\end{enumerate}
We also give some new examples.
\begin{enumerate}[label=(\arabic*)]
\setcounter{enumi}{2}
\item Let $P$ be a poset. Then $P$ can be viewed as a category as usual: the objects are the elements of $P$ and for $x,y \in P$ there is an arrow $x \to y$ precisely when $x \leq y$. Such a poset $P$ is locally linearly preordered if and only if for every $x \in P$ its upper set ${\uparrow} x = \{y \in P : x \leq y\}$ is a linear order, characterising when the pure monomorphisms in presheaves on posets are cofibrantly generated.
\item Let $\Delta$ be the category of finite linear orders and order preserving homomorphisms. Recall that a \emph{simplicial set} is a functor $\Delta^\textup{op} \to \Set$. The pure monomorphisms in the category of simplicial sets $\Set^{\Delta^\textup{op}}$ are not cofibrantly generated. For example, $\{0\} \xrightarrow{0 \mapsto 0} \{0 < 1\} \xleftarrow{0 \mapsto 1} \{0\}$ cannot be completed to a commuting triangle.
\end{enumerate}
\end{exams}
Surprisingly, it was unknown whether or not pure monomorphisms in the category of acts over $(\N; 1, \times)$ are cofibrantly generated. It now easily follows from Theorem \ref{cofibrantly-generated-classification} that this is not the case. Given its contrast to the additive monoid of natural numbers (see Example \ref{some-examples}(1)), we highlight it as a separate corollary.
\begin{coro}
\label{multiplicative-natural-numbers-not-cof-gen}
Let $\N^\times = (\N; 1, \times)$ be the monoid of natural numbers with multiplication. Then the pure monomorphisms in $\Set^{\N^\times}$ are not cofibrantly generated.
\end{coro}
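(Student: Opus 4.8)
The plan is to reduce immediately to the combinatorial characterisation established in Theorem \ref{cofibrantly-generated-classification}. By the equivalence of conditions (i) and (vii) there, the pure monomorphisms in $\Set^{\N^\times}$ fail to be cofibrantly generated as soon as we show that the one-object category $\N^\times$ is \emph{not} locally linearly preordered. Via the reformulation in Example \ref{monoid-as-category}, this amounts to exhibiting $a, b \in \N$ for which there is no $c \in \N$ with $a = cb$ and no $c \in \N$ with $ca = b$.

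The witnesses I would take are $a = 2$ and $b = 3$. The set $\{3c : c \in \N\}$ never contains $2$ and the set $\{2c : c \in \N\}$ never contains $3$, so the required $c$ exists in neither case; equivalently, neither of $2$ and $3$ divides the other. Hence $\N^\times$ is not locally linearly preordered, and applying the contrapositive of the implication (i) $\Rightarrow$ (vii) from Theorem \ref{cofibrantly-generated-classification} finishes the proof.

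There is essentially no obstacle: the entire argument is the choice of a coprime pair of non-units. The only point to keep in mind is the harmless edge case $b = 0$ (if one takes $0 \in \N$), for which $ca = b$ is solved by $c = 0$ — but this never arises here, since the pair $(2,3)$ already does the job.
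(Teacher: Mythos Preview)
Your proposal is correct and matches the paper's own proof essentially verbatim: both exhibit the pair $(2,3)$ to show that $\N^\times$ is not locally linearly preordered and then invoke Theorem \ref{cofibrantly-generated-classification}.
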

\begin{proof}
For example, there is no $n \in \N$ such that $2 = n \times 3$ or $3 = n \times 2$, so $\N^\times$ is not locally linearly preordered.
\end{proof}
It is well known that having enough $\cm$-injectives for a class of arrows $\cm$ does not imply that $\cm$ is cofibrantly generated. For example, we can take $\cm$ to be the class of embeddings in the category of posets. By \cite[Propositions 1 and 2]{BB} the category of posets has enough $\cm$-injectives, but by \cite[Proposition 3.4]{AHRT} $\cm$ is not cofibrantly generated. However, this is not the same as our setup, so we leave the converse of Corollary \ref{enough-pure-injectives} as a question.
\begin{question}
\label{question-enough-injectives-but-not-cofibrantly-generated}
Is there a presheaf category with enough pure injectives such that the pure monomorphisms are not cofibrantly generated?
\end{question}


\begin{thebibliography}{ABCD}

\bibitem{AHRT} J. Ad\'{a}mek, H. Herrlich, J. Rosick\'{y} and W. Tholen, {\em Weak Factorization Systems and Topological Functors}, Applied Categorical Structures 10 (2002), 237-249.

\bibitem{AR} J. Ad\'{a}mek and J. Rosick\'{y}, {\em Locally Presentable and Accessible Categories}, Cambridge University Press (1994).

\bibitem{AR-pure} J. Ad\'{a}mek and J. Rosick\'{y}, {\em On Pure Quotients and Pure Subobjects}, Czechoslovak Mathematical Journal 54 (2004), 623-636.

\bibitem{Ba} B. Banaschewski, {\em Equational Compactness of G-Sets}, Canadian Mathematical Bulletin 17 (1974), 11-18.

\bibitem{BB} B. Banaschewski and G. Bruns, {\em Categorical characterization of the MacNeille completion}, Archiv der Mathematik 18 (1967), 369-377.

\bibitem{BN} B. Banaschewski and E.~M. Nelson, {\em Equational compactness in equational classes of algebras}, Algebra Universalis 2 (1972), 152-165;

\bibitem{Barr} M. Barr, {\em On categories with effective unions}, Categorical Algebra and its Applications, Lecture Notes in Mathematics 1348, Springer-Verlag (1988), 19--35.

\bibitem{B} T. Beke, {\em Sheafifiable homotopy model categories}, Mathematical Proceedings of the Cambridge Philosophical Society 129 (2000), 447-475.

\bibitem{BRo} F. Borceux and J. Rosick\'y, {\em Purity in algebra}, Algebra Universalis 56 (2007), 17-35.

\bibitem{JS} A. Joyal and R. Street, {\em Pullbacks equivalent to pseudopullbacks}, Cahiers de Topologie et G\'{e}om\'{e}trie Diff\'{e}rentielle Cat\'{e}goriques 34 (1993), 153-156.

\bibitem{K} M. Kamsma, {\em NSOP$_1$-like independence in AECats}, The Journal of Symbolic Logic 89 (2024), 724-757.

\bibitem{KR} M. Kamsma and J. Rosick\'{y}, {\em Lifting independence along functors}, Applied Categorical Structures 34 (2025).

\bibitem{KKM} M. Kilp, U. Knauer, A. V. Mikhalev, {\em Monoids, Acts and Categories}, De Gruyter (2011).

\bibitem{LPRV} M. Lieberman, L. Positselski, J. Rosick\'y and S. Vasey, {\em Cofibrant generation of pure monomorphisms}, Journal of Algebra 560 (2020), 1297-1310.

\bibitem{LRV} M. Lieberman, J. Rosick\'y and S. Vasey, {\em Forking independence from the categorical point of view}, Advances in Mathematics 346 (2019), 719-772.

\bibitem{LRV1} M. Lieberman, J. Rosick\'y and S. Vasey, {\em Cellular categories and stable independence}, The Journal of Symbolic Logic 88 (2023).

\bibitem{M-AR} M. Mazari-Armida and J. Rosick\'{y}, \emph{Relative injective modules, superstability and noetherian categories}, Journal of Mathematical Logic, online (2024).

\bibitem{Mu} T. G. Mustafin, {\em Stability of the theory of polygons}, Proceedings of the Institute of Mathematics 8 (1988), 92-108 (in Russian); translated in Model Theory and Applications, American Mathematical Society translations 295 (1999), 205-223.

\bibitem{MP} M. Makkai and R. Par\'e, {\em Accessible Categories: The Foundation of Categorical Model Theory}, Contemporary Mathematics 104 (1989).

\bibitem{P} M. Prest, {\em Purity, Spectra and Localisation}, Cambridge University Press (2009).

\bibitem{Pr} H. Pr\"ufer, {\em Untersuchungen \"uber die Zerlegbarkeit der abz\"ahlbaren prim\"aren Abelschen Gruppen}, Mathematische Zeitschrift 17 (1923), 35–61.

\bibitem{R} P. Rothmaler, {\em Purity in model theory}, in Advances in algebra and model theory (ed.\ by M. Droste and R. G\"obel), Gordon and Breach 1997, 445–470.

\bibitem{ST}  J. \v Saroch and J. Trlifaj, {\em Test sets for factorization properties of modules}, Rendiconti del Seminario Matematico della Universit\`a di Padova 144 (2020), 217-238.

\bibitem{Shelah} S. Shelah, {\em Classification theory and the number of nonisomorphic models}, North-Holland Publishing (1990).

\bibitem{T2} W. Taylor, {\em Residually small varieties}, Algebra Universalis 2 (1972), 33-53.

\bibitem{T} W. Taylor, {\em Pure-irreducible mono-unary algebras}, Algebra Universalis (1974), 235-243.

\bibitem{T3} W. Taylor, {\em Pure compactifications in quasi-primal varieties}, Canadian Journal of Mathematics 28 (1976), 50-62.
 
\bibitem{TZ} K. Tent and M. Ziegler, {\em A Course in Model Theory}, Cambridge University Press (2012).

\bibitem{W2} G.H. Wenzel {\em Equational compactness}, in: G. Gr\"atzer, Universal Algebra, Springer-Verlag (1979).

\bibitem{W} G.H. Wenzel, {\em Subdirect irreducibility and equational compactness in unary algebras}, Archiv der Mathematik 21 (1970), 256-264.
 
\end{thebibliography}
\end{document}